\let\mathcal\mathscr
\def\@tocline#1#2#3#4#5#6#7{\relax
  \ifnum #1>\c@tocdepth 
  \else
    \par \addpenalty\@secpenalty\addvspace{#2}%
    \begingroup \hyphenpenalty\@M
    \@ifempty{#4}{%
      \@tempdima\csname r@tocindent\number#1\endcsname\relax
    }{%
      \@tempdima#4\relax
    }%
    \parindent\z@ \leftskip#3\relax \advance\leftskip\@tempdima\relax
    \rightskip\@pnumwidth plus4em \parfillskip-\@pnumwidth
    #5\leavevmode\hskip-\@tempdima
      \ifcase #1
       \or\or \hskip 1em \or \hskip 2em \else \hskip 3em \fi%
      #6\nobreak\relax
    \dotfill\hbox to\@pnumwidth{\@tocpagenum{#7}}\par
    \nobreak
    \endgroup
  \fi}
\numberwithin{equation}{section}
\newtheorem{theorem}{Theorem}[section]
\newtheorem{lemma}[theorem]{Lemma}
\newtheorem{corollary}[theorem]{Corollary}
\theoremstyle{definition}
\DeclareMathOperator{\Mor}{Mor}
\DeclareMathOperator{\ord}{ord} 
\DeclareMathOperator{\Tr}{Tr} 
\DeclareMathOperator{\meas}{meas} 
\DeclareMathAlphabet{\xcal}{OMS}{cmsy}{m}{n} 
\DeclareMathOperator{\ch}{char} 
\newcommand*\diff{\mathop{}\!\mathrm{d}} 
\newcommand{\Mod}[1]{\ \text{mod}\ #1} 
\newcommand{\Sum}{\mathlarger{\sum}} 
\newcommand{\Suma}{\mathlarger{\sideset{}{^*}\sum}_{|a|<|r|}}
\renewcommand{\v}[1]{{\bold{#1}}} 
\newcommand{\Fp}{\mathbb{F}_{p}}
\newcommand{\Fq}{\mathbb{F}_{q}}
\newcommand{\whC}{{\widehat C}}
\newcommand{\whN}{{\widehat N}}
\newcommand{\whQ}{{\widehat Q}}
\newcommand{\whR}{{\widehat R}}
\newcommand{\whY}{{\widehat Y}}
\newcommand{\whTheta}{{\widehat \Theta}}
\newcommand{\mbC}{{\mathbb C}}
\newcommand{\mbN}{{\mathbb N}}
\newcommand{\mbP}{{\mathbb P}}
\newcommand{\mbR}{{\mathbb R}}
\newcommand{\mbT}{{\mathbb T}}
\newcommand{\mbZ}{{\mathbb Z}}
\renewcommand{\leq}{\leqslant}
\renewcommand{\geq}{\geqslant}
\begin{document}

\title{Rational curves on cubic hypersurfaces over finite fields}

\author{Adelina\ M\^{a}nz\u{a}\c{t}eanu}
\address{School of Mathematics\\
University of Bristol\\ Bristol\\ BS8 1TW\\ UK}
\email{am15112@bristol.ac.uk}

\begin{abstract}
Given a smooth cubic hypersurface $X$ over a finite field of characteristic greater than 3 and two generic points on $X$, we use a function field analogue of the Hardy--Littlewood circle method to obtain an asymptotic formula for the number of degree $d$ rational curves on $X$ passing through those two points. We use this to deduce the dimension and irreducibility of the moduli space parametrising such curves, for large enough $d$.
\end{abstract}

\date{\today}

\thanks{2010  {\em Mathematics Subject Classification.} 11G35 (11P55, 11T55, 14G05)}

\maketitle

\tableofcontents

\thispagestyle{empty}
\section{Introduction}  
Let $k = \Fq$ be a finite field and let $F \in k[x_1, \ldots, x_n]$ denote a non-singular homogeneous polynomial of degree $3$. Moreover, let $X \subset \mathbb{P}_k^{n-1}$ be the smooth cubic hypersurface defined over $k$ by $F=0$. Let $C$ be a smooth projective curve over $k$. Then $k(C)$ has transcendence degree 1 over a $C_1$-field and, by the Lang--Tsen theorem \cite[Theorem 3.6]{Gre69}, the set $X(k(C))$ of $k(C)$-rational points on $X$ is non-empty for $n \geq 10$. This still holds for $X$ singular. In this paper we are interested in the case $C = \mathbb{P}^1$, writing $K = k(t)$ denote the function field of $C$ over $k$. 
A \emph{degree $d$ rational curve on $X$} is a non-constant morphism $f:\mbP_{k}^1 \to X$ given by
\begin{equation}\label{rational curve}
f=\left(f_1(u,v), \ldots, f_n(u,v)\right),
\end{equation}
where $f_i\in \bar{k}[u, v]$ are homogeneous polynomials of degree $d \geq 1$, with no non-constant common factor in $\bar{k}[u, v]$, such that
$$
F\left(f_1(u,v),\ldots, f_n(u,v)\right) \equiv 0.
$$
Such a curve is said to be \emph{$m$-pointed} if it is equipped with a choice of $m$ distinct points $P_1, \ldots, P_m \in X(k)$ called the \emph{marks} through which the curve passes. Up to isomorphism, these curves are parametrised by the moduli space $\xcal{M}_{0,m}(\mbP_k^1,X,d)$. The compactification of this space,  $\overline{\xcal{M}}_{0,m}(\mbP_k^1,X,d)$, is the Kontsevich moduli space of stable maps. 

Suppose from now on that $\# k = q$ and $\ch(k) > 3$. In \cite[Example 7.6]{Kol08}, Koll\'{a}r proves that there exists a constant $c_n$ depending only on $n$ such that for any $q > c_n$ and any point $x \in X(k)$, there exists a $k$-rational curve of degree at most 216 on $X$ passing through $x$. In our investigation we focus on the case $m = 2$ of 2-pointed rational curves on $X$.

Associate to $F$ the \emph{Hessian matrix}
$$
\v{H}(\v{x}) = \left(\frac{\partial^2 F}{\partial x_i \partial  x_j} \right)_{1\leq i, j \leq n}
$$
and the \emph{Hessian hypersurface} $H=0$, where $H(\v{x}) = \det \v{H}(\v{x}).$ Now let $\v{a}, \v{b} \in \Fq^n \setminus \{\v{0}\}$ be such that $F(\v{a})=F(\v{b})=0$ and $ {H}(\v{b}) \neq 0.$ Write $a = [\v{a}]$, $b = [\v{b}]$ for the corresponding points in $X(k)$. As is well-known (see \cite[Lemma 1]{Hoo88}, for example), the Hessian $H(\v{x})$ does not vanish identically on $X$, since $\ch(k)>3$. 
The main goal of this paper is to obtain an asymptotic formula for the number of rational curves of degree $d$ on $X$ passing through $a$ and $b$. Denote the space of such curves by $\Mor_{d, a, b} (\mbP_{k}^1,X)$. We can write the $f_i$ in (\ref{rational curve}) explicitly as 
$$
f_i(u,v) = \alpha_d^{\left(i\right)}u^d +\alpha_{d-1}^{\left(i\right)}u^{d-1}v + \ldots + \alpha_0^{\left(i\right)}v^d,
$$
where $\alpha_j^{\left(i\right)}\in k$ for $0 \leq j \leq d$ and $1 \leq i \leq n$. Then, we capture the condition that the rational curve $f$ passes through the points $\v{a}$ and $\v{b}$ by selecting
\begin{equation}\label{eq: conditions from a,b}
\begin{aligned}
 \left(\alpha_0^{(1)}, \ldots, \alpha_0^{(n)}\right ) =\v{a},\\ 
 \left(\alpha_d^{(1)}, \ldots, \alpha_d^{(n)}\right ) =\v{b}. 
\end{aligned}
\end{equation}

There exists a correspondence between the rational curves on $X$ of bounded degree and the $K$-points on $X$ of bounded height. Define $N_{a,b}(d)$ to be the number of polynomials $f_1, \ldots, f_n \in \Fq[t]$ of degree at most $d$ whose constant coefficients are given by $\v{a}$ and whose leading coefficients are given by $\v{b}$, such that $F(f_1, \ldots, , f_n) = 0$. Thus, $N_{a,b}(d)$ counts the $\Fq$-points $(f_1, \ldots, f_n)$ on the affine cone of $\Mor_{d,a,b}(\mbP^1_k,X)$, where the condition that $f_1, \ldots, f_n$ have no common factor is dropped. Using a version of the Hardy--Littlewood circle method for the function field $K$ developed by Lee \cite{Lee11,Lee13}, and further by Browning--Vishe \cite{BVBlue}, we shall obtain the following result.
\begin{theorem}\label{MainResult} 
Fix $k=\Fq$ with $\mathrm{char}(k)> 3$. Fix a smooth cubic hypersurface $X \subset \mbP^{n-1}_k$, where $n \geq 10$. Let $a,b \in X(k)$, not both on the Hessian. Then, we have
$$
N_{a,b}(d) = q^{(d-1)n-(3d-1)} + O\left(q^{\frac{5(d+2)n}{6}-\frac{5d+16}{3}} + q^{\frac{(5d+8)n}{6}-\frac{3d}{2}-\frac{14}{3}} + q^{\frac{3(d+5)n}{4}-\frac{3(d+5)}{4}}  \right),
$$
where the implied constant in the estimate depends only on $d$ and $X$. 
\end{theorem}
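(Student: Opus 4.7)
The strategy is to apply the function field analogue of the Hardy--Littlewood circle method, as developed by Lee \cite{Lee11, Lee13} and refined in Browning--Vishe \cite{BVBlue}. After fixing a nontrivial additive character of $\Fq$, one writes
\[
N_{a,b}(d) = \int_{\mbT} S(\alpha)\, d\alpha,
\]
where $\mbT = \Fq((1/t))/\Fq[t]$ is the usual ``torus'' and $S(\alpha)$ is an exponential sum ranging over tuples $\v{f} = (f_1, \ldots, f_n) \in \Fq[t]^n$ with $\deg f_i \leq d$ whose constant and leading coefficients are prescribed by \eqref{eq: conditions from a,b}. These constraints fix $2n$ of the $n(d+1)$ coefficients of $\v f$, leaving $(d-1)n$ free variables; on the other hand, since $F(\v a) = F(\v b) = 0$, the constant and leading coefficients of $F(\v f) \in \Fq[t]$ vanish automatically, so $F(\v f) = 0$ imposes only $3d-1$ independent linear conditions on the remaining coefficients. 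This matches the expected main term $q^{(d-1)n - (3d-1)}$ as the ``probabilistic'' count.

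I would then perform a Dirichlet-type dissection of $\mbT$: each $\alpha$ is approximated by $\theta + a/r$ with $a, r \in \Fq[t]$, $r$ monic, $\gcd(a,r) = 1$, $\deg r \leq R$, and $|\theta|$ correspondingly small. A suitably chosen parameter $R$ (a fractional power of $q^d$) splits $\mbT$ into major arcs (small $\deg r$) and minor arcs, and the two are treated in complementary ways.

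On the major arcs I would evaluate $S(\theta + a/r)$ by reducing $\v f$ modulo $r$ and integrating over $\theta$. This factorizes the local contribution into a singular series built from complete exponential sums modulo $r$ and a singular integral coming from the place at infinity. The assumption $H(\v b) \neq 0$ ensures that the affine variety $\{F(\v f) = 0\}$ is smooth at $\v b$, so a function field Hensel argument identifies the local densities at finite places while smooth parametrization at infinity evaluates the singular integral; together these yield the main term $q^{(d-1)n-(3d-1)}$ with error arising from completion of the arcs.

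The principal technical difficulty, and the source of all three error terms, is the minor arc treatment. Following \cite{BVBlue}, one applies two rounds of Weyl differencing to $S(\alpha)$, reducing the cubic exponential sum to a count of pairs $(\v x, \v y)$ of small height with $\v H(\v x) \v y \equiv \v 0$ modulo a suitable modulus. Since $\det \v H$ does not vanish identically on $X$ (because $\ch(k) > 3$, cf.\ \cite{Hoo88}), and since $H(\v b) \neq 0$ supplies a ``good'' point for the analysis, a geometric bound on the number of such pairs yields a power saving that scales with $n$. The hypothesis $n \geq 10$ is precisely what allows this saving to beat the trivial bound $q^{(d-1)n}$ once integrated against the measure of the minor arcs. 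The three separate error terms in the statement arise from optimizing $R$ against the boundary of the major arcs, the generic minor arc contribution, and the transitional range, each becoming the dominant error in a different range of the parameters $d$ and $n$.
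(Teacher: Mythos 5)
Your overall framework is correct in outline — circle method over $K = \Fq(t)$, $N_{a,b}(d) = \int_{\mbT} S(\alpha)\,\diff\alpha$, and the heuristic for the exponent $(d-1)n-(3d-1)$ is accurate. However, the core of your proposed proof takes the wrong route and would not reach $n \geq 10$.

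You propose a Birch--Davenport style argument: a major/minor arc dichotomy and two rounds of Weyl differencing on the minor arcs, reducing to counting pairs $(\v x, \v y)$ with $\v H(\v x)\v y \equiv \v 0$. That approach, even in its sharpest form, requires considerably more than ten variables for a single cubic form; Weyl differencing simply loses too much. The paper instead follows the Kloosterman-refined delta method of Heath-Brown \cite{Hea83} as adapted to function fields by Browning--Vishe \cite{BVBlue}: there is no major/minor split at all. One takes a \emph{complete} Farey dissection of $\mbT$ over all monic $r$ with $|r| \leq \whQ$ (here $Q = \tfrac{3(d+1)}{2}$), then applies Poisson summation to rewrite $N(d)$ as a sum over a dual vector $\v c \in \xcal{O}^n$ of products $S_{r,t,\v a}(\v c)\,I_{\tilde r}(\theta;\v c)$ (Lemma~\ref{Lemma4.4inBlue}). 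The main term is exactly the contribution from $\v c = \v 0$, yielding $\mathfrak{S}\,\mathfrak{J}\,|P|^{n-3}$. The error term is the contribution from $\v c \neq \v 0$, and crucially it is split according to whether $F^*(\v c) = 0$ or $F^*(\v c) \neq 0$, where $F^*$ is the dual form: for $F^*(\v c) \neq 0$ one obtains square-root cancellation in the complete exponential sums $S_\varpi(\v c)$ (cf.\ (\ref{Equation5.2inBlue})), while the locus $F^*(\v c) = 0$ is sparse and handled by a dimension-growth bound together with the modified averaged sum estimate of Lemma~\ref{ModifiedLemma6.4inBlue}. This distinction — not Weyl differencing — is what delivers the threshold $n \geq 10$.

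A secondary point: you attribute the role of $H(\v b) \neq 0$ to smoothness at $\v b$ and a Hensel argument for the finite-place densities. In fact smoothness at $\v b$ follows from $\nabla F(\v b) \neq \v 0$ (which holds since $X$ is smooth), and the finite-place densities only use non-singularity of $F$ modulo every prime $\varpi$. The hypothesis $H(\v b) \neq 0$ is used specifically in the archimedean oscillatory-integral estimate (Lemma~\ref{Lemma7.3inBlue}): it guarantees $|\det \v H(\v x)| = q^{-n}$ uniformly on the support of the weight $\omega$, so that a stationary-phase argument gives $|I_{\tilde r}(\theta;\v c)| \ll \min\{q^{-n}, q^n|\theta P^3|^{-n/2}\}$, which is where the hypothesis ``not both on the Hessian'' enters.
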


The condition that one of the two fixed points is not on the Hessian comes from our analysis of certain oscillatory integrals (see Lemma \ref{Lemma7.3inBlue}). 

Although it would be possible to generalise Theorem \ref{MainResult} to handle rational curves passing through any generic finite set of points in $X(k)$, the main motivation for considering rational curves through two fixed points comes from the notion of rational connectedness. In \cite{Man72}, Manin defined $R$-equivalence on the set of rational points of a variety in order to study the parametrisation of rational points on cubic surfaces. We say that two points $a, b \in X(k)$ are \emph{directly R-equivalent} if there is a morphism $f  : \mathbb{P}^1 \to X $ (defined over $k$) with $f(0,1) = a$  and $f(1,0) = b$; the generated equivalence relation is called \emph{R-equivalence}. In \cite{SD81}, Swinnerton-Dyer proved that $R$-equivalence is trivial on smooth cubic surfaces over finite fields; that is, all $k$-points are $R$-equivalent. Next, the result was generalised for smooth cubic hypersurfaces $X \subset \mathbb{P}^{n-1}_k$, if $n \geq 6$ by Madore in \cite{Mad08}, and if $n \geq 4$ and $q \geq 11$ by Koll\'{a}r in \cite{Kol08}. Moreover, Madore's result holds for $X$ defined over any $C_1$ field. The study of $R$-equivalence is closely related to understanding the geometry of the moduli space of rational curves. In particular, it is interesting to study $R$-equivalence in the case of varieties with many rational curves. Such varieties are called \emph{rationally connected} and were first studied by Koll\'{a}r, Miyaoka and Mori in \cite{KMM}, and independently by Campana in \cite{Cam92}. Roughly speaking, $Y$ is rationally connected if for two general points of $Y$ there is a rational curve on $Y$ passing through them. Thus, rationally connected varieties are varieties for which $R$-equivalence becomes trivial when one extends the ground field to an arbitrary algebraically closed field. Note that in the case of fields of positive characteristic one should consider \emph{separably rationally connected} varieties. For precise definitions and a thorough introduction to the theory see Koll\'{a}r \cite{Kol96}, \cite{Kol01}, and Koll\'{a}r--Szab\'{o} \cite{KollarSzabo}. 
\begin{corollary}
Fix $k=\Fq$ with $\ch(k)> 3$. Fix a smooth cubic hypersurface $X \subset \mbP^{n-1}_k$, where $n \geq 10$. Then there exists a constant $c_X >0$ such that for any points $a, b \in X(k)$, not both on the Hessian, and any $d \geq \frac{19(n-1)}{n-9}$, if $q \geq c_X$, then there exists a $\Fq$-rational curve $C \subset X$ of degree $d$ that passes through $a$ and $b$.
\end{corollary}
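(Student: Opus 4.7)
The plan is to apply Theorem~\ref{MainResult} directly: I show that the main term $q^{(d-1)n-(3d-1)}$ strictly dominates each of the three error terms precisely when $d \geq \tfrac{19(n-1)}{n-9}$, forcing $N_{a,b}(d) \geq 1$ for $q$ sufficiently large; then a short dimension count rules out the case where every tuple contributing to $N_{a,b}(d)$ has a non-trivial common factor, which is the obstruction to defining a morphism of degree exactly $d$.

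Write $M := (d-1)n - (3d-1) = dn - n - 3d + 1$ for the main exponent. Comparing $M$ to each error exponent in turn: the third, equal to $\tfrac{3(d+5)(n-1)}{4}$, gives after clearing denominators
$$
d(n-9) > 19(n-1),
$$
which is precisely the stated hypothesis. The first and second error exponents yield the weaker conditions $d(n-8) > 16n - 38$ and $d(n-9) > 14n - 34$, both implied by the third for $n \geq 10$. Because the gap between $M$ and the largest error exponent grows with $d$, the implied constant in Theorem~\ref{MainResult} (which depends on $d$) can be absorbed into a single threshold $c_X$ depending only on $X$, above which $N_{a,b}(d) \geq 1$ for every admissible $d$.

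Finally, I pass from a polynomial solution to a genuine degree-$d$ rational curve. Since $\v{a}, \v{b} \neq \v{0}$, no common factor of $(f_1, \ldots, f_n)$ can involve $u$ or $v$; any common factor $g(t) \in \Fq[t]$ of degree $e \geq 1$ must have non-zero leading and constant coefficients, giving at most $q^{e+1}$ choices. The quotient tuple $(f_i/g)$ solves the same problem with degree $d-e$ and endpoints rescaled to $\v{a}/g(0), \v{b}/g^{\mathrm{lead}}$; these scalings preserve the Hessian non-vanishing (since $H$ is homogeneous of degree $n$), so Theorem~\ref{MainResult} applies to bound the quotient count by $O(q^{(d-e-1)n - 3(d-e) + 1})$. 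Summing over $e \geq 1$ yields at most $O(q^{M + 5 - n})$ degenerate tuples, strictly smaller than $q^M$ for $n \geq 10$; hence after enlarging $c_X$ there exists a non-degenerate tuple and therefore an honest $\Fq$-rational morphism $\mbP^1 \to X$ of degree $d$ through $a$ and $b$. The main obstacle is the arithmetic juggling of exponents in step one together with the bookkeeping around uniformity of the implied constant in $d$; the common-factor count is dimensionally comfortable once $n \geq 10$.
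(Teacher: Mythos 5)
Your step-one exponent arithmetic is correct: comparing the main exponent $M=(d-1)n-(3d-1)$ against the three error exponents yields $d(n-8)>16n-38$, $d(n-9)>14n-34$, and $d(n-9)>19(n-1)$ respectively, with the third genuinely the binding one for $n\geq 10$. This matches the threshold in the statement. But two parts of your proposal need repair.

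First, you invoke ``Theorem~\ref{MainResult} applies to bound the quotient count by $O(q^{(d-e-1)n-3(d-e)+1})$'' for \emph{every} $e\geq 1$. Theorem~\ref{MainResult} only gives that the main term dominates when $d-e\geq\tfrac{19(n-1)}{n-9}$. For $e>d-\tfrac{19(n-1)}{n-9}$ the quotient degree $d'=d-e$ falls below the threshold and the error term $q^{3(d'+5)(n-1)/4}$ exceeds $q^{(d'-1)n-3d'+1}$, so your claimed $O(q^{M+5-n})$ total is not justified there. (Using only the naive bound $N\leq q^{(d'-1)n}$ in this range does not save you either: for $n=10$, $d=171$, $e=2$, the naive estimate $q^{e}q^{(d-e-1)n}$ already exceeds $q^{M}$.) One can close this by splitting the range of $e$: for $e\leq d-D_0$ use the main-term bound, for intermediate $e$ use the explicit error-term bound from Theorem~\ref{MainResult}, and for $e$ very close to $d$ use the trivial bound, and then verify the resulting inequalities in each range. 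The arithmetic does come out, but your write-up skips precisely the middle range where it matters, so as stated this step has a hole.

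Second, the uniformity over $d$ is asserted via ``the gap between $M$ and the largest error exponent grows with $d$,'' which is not by itself enough: the implied constant in Theorem~\ref{MainResult} carries a $|P|^{\varepsilon}=q^{\varepsilon(d+1)}$ factor, so one must show the gap dominates $\varepsilon(d+1)$ plus the $\varepsilon$-dependent constant uniformly. This can be done (choose $\varepsilon$ depending only on $n$ so that the gap minus $\varepsilon(d+1)$ stays bounded below for all large $d$, then handle the finitely many remaining $d$ in the range $[D_0,2D_0+1]$ separately and take a maximum), but the one-line justification you give is not a proof. For comparison, the paper does not argue via Theorem~\ref{MainResult} at all here: it observes the corollary already follows from Pirutka's result that any two $k$-points on $X$ can be joined by two lines on $X$ over $k$.

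Finally, a small point: taking the common factor $g$ to be monic (it is the gcd, after all) removes the scaling ambiguity and makes $\v{b}/g^{\mathrm{lead}}=\v{b}$, so only the constant term is rescaled; this also tightens your ``$q^{e+1}$ choices'' to $(q-1)q^{e-1}$.
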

This can also be seen as a corollary of Pirutka \cite[Proposition 4.3]{Pir11} which states that any two points $a,b \in X(k)$ can be joined by two lines on $X$ defined over $k$.  

Keeping track of the dependance on $q$ allows us to deduce further results regarding the geometry of the moduli space $\Mor_{d,a,b}(\mbP^1_{k},X)$, in the spirit of those obtained by Browning--Vishe \cite{BV2}. We can regard $f$ in (\ref{rational curve}) under the conditions given by (\ref{eq: conditions from a,b}) as a point in $\mbP_k^{n(d-1)-1}$. Then the space $\Mor_{d,a,b}(\mbP^1_k,X)$ is an open subvariety of $\mbP_k^{n(d-1)-1}$ cut out by $3d-1$ equations and so has expected naive dimension $\mu = (n-3)d -n$. 
\begin{corollary}\label{Cor:leq1}
Fix $k=\Fq$ of $\ch(k)> 3$. Fix a smooth cubic hypersurface $X \subset \mbP^{n-1}_k$, where $n \geq 10$. Pick any points $a, b \in X(k)$, not both on the Hessian. Then for $d \geq \frac{19(n-1)}{n-9}$ we have
$$
\lim_{q\to\infty}q^{-\hat{\mu}} N_{a,b}(d) \leq 1,
$$
where $\hat{\mu} = \mu + 1$.
\end{corollary}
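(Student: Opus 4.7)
The plan is to deduce Corollary \ref{Cor:leq1} directly from Theorem \ref{MainResult} by comparing the exponent of the main term with those of the three error terms. A quick expansion shows $(d-1)n - (3d-1) = (n-3)d - n + 1 = \mu + 1 = \hat{\mu}$, so dividing the asymptotic formula through by $q^{\hat{\mu}}$ gives
$$ q^{-\hat{\mu}} N_{a,b}(d) = 1 + O\!\left(q^{E_1 - \hat{\mu}} + q^{E_2 - \hat{\mu}} + q^{E_3 - \hat{\mu}}\right), $$
where $E_1, E_2, E_3$ denote the three error exponents appearing in Theorem \ref{MainResult}. The task therefore reduces to showing that each $E_i - \hat{\mu} \leq 0$ under the hypothesis $d \geq \frac{19(n-1)}{n-9}$.

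A short computation rewrites these differences as
\begin{align*}
6(\hat{\mu} - E_1) &= d(n-8) - 16n + 38,\\
6(\hat{\mu} - E_2) &= d(n-9) - 14n + 34,\\
4(\hat{\mu} - E_3) &= d(n-9) - 19(n-1).
\end{align*}
The third expression is plainly the binding constraint: its non-negativity is equivalent to $d \geq \frac{19(n-1)}{n-9}$, exactly the stated hypothesis. The other two thresholds, $\frac{16n-38}{n-8}$ and $\frac{14n-34}{n-9}$, are smaller than $\frac{19(n-1)}{n-9}$ for all $n\geq 10$, so the binding inequality handles them automatically.

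Finally, once all three error exponents are at most $\hat{\mu}$, taking $q \to \infty$ with $d$ and $X$ (and hence the implied constant from Theorem \ref{MainResult}) fixed yields $\lim_{q\to\infty} q^{-\hat{\mu}} N_{a,b}(d) \leq 1$. The whole argument is bookkeeping of exponents; the threshold $\frac{19(n-1)}{n-9}$ is reverse-engineered so that the dominant error contribution $E_3$ is controlled by the main term, and there is no substantial obstacle.
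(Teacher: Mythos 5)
Your computations are correct, and this is indeed the deduction the paper (implicitly) intends, since the corollary is stated without an explicit proof as an immediate consequence of Theorem~\ref{MainResult}: the main-term exponent equals $\hat\mu$, and the three exponent differences
\[
6(\hat\mu - E_1) = d(n-8) - 16n + 38,\quad
6(\hat\mu - E_2) = d(n-9) - 14n + 34,\quad
4(\hat\mu - E_3) = d(n-9) - 19(n-1)
\]
are exactly as you state, with the third giving the binding threshold for $n\ge 10$.

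There is, however, one logical slip in the final step. You write that ``once all three error exponents are at most $\hat\mu$'' one may conclude $\lim_{q\to\infty} q^{-\hat\mu}N_{a,b}(d)\le 1$. That implication requires \emph{strict} inequality $E_i < \hat\mu$: if $E_3 = \hat\mu$, Theorem~\ref{MainResult} only gives $q^{-\hat\mu}N_{a,b}(d) = 1 + O(1)$, which neither forces the limit to exist nor bounds it by~$1$ (the implied constant could exceed~$0$). The hypothesis $d\ge \frac{19(n-1)}{n-9}$ is equivalent only to $\hat\mu - E_3 \ge 0$, so equality can occur: $\frac{19(n-1)}{n-9} = 19 + \frac{152}{n-9}$ is an integer precisely when $(n-9)\mid 152$, e.g.\ $n=10$, $d=171$, where indeed $\hat\mu = E_3 = 1188$. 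At such a boundary point your argument does not close. To be safe one should either require $d > \frac{19(n-1)}{n-9}$, or note that for all other integer $d$ in the stated range the inequality is automatically strict. This imprecision arguably sits in the paper's statement as well, but your phrase ``at most'' explicitly encodes the non-strict version, so it deserves flagging.
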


A result similar to \cite[Theorem 2.1]{BV2} concerning $\Mor_{d,a,b}(\mbP^1_{k},X)$ follows from Corollary \ref{Cor:leq1}. Now, by \cite[Theorem II.1.2]{Kol96}, all irreducible components of $\Mor_{d,a,b}(\mbP^1_{k},X)$ have dimension at least $\mu$. Then, comparing this with the Lang--Weil estimate \cite{LangWeil}, we obtain that the space $\Mor_{d,a,b}(\mbP^1_{k},X)$ is irreducible and of expected dimension $\mu$. Following the same ``spreading out'' argument (see \cite[\S 10.4.11]{Groth} and \cite{Serre}) as in \cite[\S 2]{BV2}, the problem over $\mbC$ can be related to the problem over $\Fq$, and this leads to the following corollary. 
\begin{corollary}
Fix a smooth cubic hypersurface $X \subset \mbP^{n-1}$ defined over $\mbC$, where $n \geq 10$. Pick any points two points in $X(\mbC)$, not both on the Hessian. Then for each $d \geq \frac{19(n-1)}{n-9}$, the space $\xcal{M}_{0,2}(\mbP_{\mbC}^1, X, d)$ is irreducible and of expected dimension $\bar{\mu} = \mu - 3$.
\end{corollary}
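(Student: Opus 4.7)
The plan is to follow the ``spreading out'' strategy of Browning--Vishe \cite[\S 2]{BV2}, adapted to the two-pointed setting. Let $X/\mbC$ be defined by a cubic $F \in \mbC[x_1, \ldots, x_n]$ and fix representatives of $a, b \in X(\mbC)$ not both on the Hessian. Choose a finitely generated $\mbZ$-subalgebra $R \subset \mbC$ containing all coefficients of $F$ and the coordinates of $a, b$. Then $F$ defines a flat family $\mathcal{X} \to \mathrm{Spec}(R)$ of cubic hypersurfaces, with $a, b$ extending to sections $\sigma_a, \sigma_b$. The conditions ``smooth'', ``$\sigma_a, \sigma_b$ not both on the Hessian'', and ``residue characteristic $> 3$'' are all open, so after inverting finitely many elements of $R$ we may assume they hold at every closed point $s \in U := \mathrm{Spec}(R)$.

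Next, the moduli spaces $\xcal{M}_{0,2}(\mbP^1, \mathcal{X}, d)$ and the auxiliary $\Mor_{d, \sigma_a, \sigma_b}$ are quasi-projective $R$-schemes whose fibers over closed points $s$ are the corresponding moduli spaces over the finite field $\kappa(s)$, and whose generic fiber, base-changed to $\mbC$, recovers the complex moduli space of interest. The idea is to translate the complex statement into a statement about the geometric generic fiber, apply the arithmetic input from Corollary \ref{Cor:leq1} at closed points, and spread back up.

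At each closed point $s \in U$ with $q_s := \#\kappa(s)$, since $n \geq 10$ and $d \geq 19(n-1)/(n-9)$, Corollary \ref{Cor:leq1} gives $\limsup_{q_s \to \infty} q_s^{-\hat\mu} N_{a_s, b_s}(d) \leq 1$. By the Lang--Weil estimate \cite{LangWeil}, if the geometric fiber of $\Mor_{d, \sigma_a, \sigma_b}$ at $s$ had a component of dimension strictly greater than $\mu$, or two or more absolutely irreducible components of dimension $\mu$ defined over $\kappa(s)$, then the $\kappa(s)$-point count on the affine cone would exceed $(1+c) q_s^{\hat\mu}$ for some fixed $c>0$, contradicting the bound above. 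Combined with the lower bound $\dim \geq \mu$ from \cite[II.1.2]{Kol96}, this forces the geometric fiber of $\Mor_{d, \sigma_a, \sigma_b}$ at $s$ to be irreducible of dimension exactly $\mu$ for a Zariski-dense set of closed points $s$ with $q_s$ arbitrarily large.

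Finally, by standard constructibility results of Grothendieck \cite{Groth}, the locus in $U$ where the geometric fibers of $\Mor_{d, \sigma_a, \sigma_b} \to U$ are irreducible of dimension $\mu$ is constructible; containing a Zariski-dense set of closed points, it must also contain the generic point. Base-changing to $\mbC$ yields irreducibility and expected dimension for $\Mor_{d, a, b}(\mbP^1_\mbC, X)$. Passage to $\xcal{M}_{0,2}(\mbP^1_\mbC, X, d)$ is a geometric quotient by a smooth connected algebraic group acting on the source $\mbP^1$, which preserves irreducibility and accounts for the dimension drop to $\bar\mu = \mu - 3$. The main difficulty is not conceptual but bookkeeping: ensuring that dimension and irreducibility genuinely behave as expected in the family over $R$ (handled via flatness and EGA), and correctly matching $\Mor$ to $\xcal{M}_{0,2}$ to get the stated dimension; the arithmetic heart of the argument has already been absorbed into Corollary \ref{Cor:leq1}.
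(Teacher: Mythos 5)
Your proof follows precisely the ``spreading out'' strategy that the paper itself sketches (and attributes to \cite[\S 2]{BV2}): establish irreducibility and expected dimension of $\Mor_{d,a,b}$ over finite fields via Corollary \ref{Cor:leq1}, the Lang--Weil estimate, and the lower bound on component dimensions from \cite[II.1.2]{Kol96}; then spread the data out over a finitely generated $\mathbb{Z}$-algebra $R$, invoke constructibility of the locus of geometrically irreducible fibres of the correct dimension, observe that this locus contains a Zariski-dense set of closed points, and conclude it contains the generic point, whence the statement over $\mathbb{C}$. Your filling-in of the steps the paper leaves implicit --- openness of the smoothness, Hessian, and characteristic conditions on $\mathrm{Spec}\,R$; the application of Corollary \ref{Cor:leq1} at each closed point $s$ to the base-changed family over extensions of $\kappa(s)$; the constructibility argument --- is correct and matches the intent of the paper's one-paragraph sketch.

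The one imprecision, which you should tighten, is the final passage from $\Mor_{d,a,b}$ to $\mathcal{M}_{0,2}(\mathbb{P}^1_\mathbb{C}, X, d)$. You write that this is ``a geometric quotient by a smooth connected algebraic group acting on the source $\mathbb{P}^1$, which preserves irreducibility and accounts for the dimension drop to $\bar\mu = \mu-3$.'' But the subgroup of $\mathrm{PGL}_2$ that preserves the constraints $f(0,1)=a$ and $f(1,0)=b$ is $\mathbb{G}_m$, which is one-dimensional, not three-dimensional; quotienting $\Mor_{d,a,b}$ by $\mathbb{G}_m$ alone does not produce the advertised drop of three. To match the paper's $\bar\mu = \mu - 3$ one has to identify precisely what space $\mathcal{M}_{0,2}(\mathbb{P}^1_\mathbb{C}, X, d)$ denotes here (a fibre of the evaluation map, or the full two-pointed space) and present it as a quotient of an appropriately enlarged space (allowing the two marked points on the source to vary) by the full $\mathrm{PGL}_2$, carrying out the dimension bookkeeping explicitly rather than asserting it. The paper is equally terse at this point, so this does not distinguish your argument from the intended one, but as written the justification for the dimension drop is not correct and needs to be spelled out.
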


In the case of stable maps, Harris--Roth--Starr \cite{HRS} prove that for a general hypersurface $X \subset \mbP^{n-1}_\mbC$ of degree at most $n-2$, the Kontsevich moduli space $\overline{\xcal{M}}_{0,m}(\mbP_{\mbC}^1, X, d)$ is a generically smooth, irreducible local complete intersection stack of the expected dimension.
\subsection*{Acknowledgements}
I would like to thank my supervisor Tim Browning for suggesting the problem and for all the useful discussions.
The author is supported by an EPSRC doctoral training grant. 

\section{Preliminaries}\label{Preliminaries}
In this section we establish notation and record some basic definitions and facts. Throughout this paper $S\ll T$ denotes an estimate of the form $S \leq CT$, where $C$ is some constant that does not depend on $q$. Similarly, the implied constants in the notation $S = O(T)$ are independent of $q$.  Let $k = \Fq$, $K= k(t)$, and $\xcal{O} = k[t]$. Finite primes $\varpi$ in $\xcal{O}$ are monic irreducible polynomials and we let $s=t^{-1}$ be the prime at infinity. These have associated absolute values which extend to give absolute values $|\cdot|_\varpi$ and $|\cdot| = |\cdot|_\infty$ on $K$. We let $K_\varpi$ and $K_\infty$ be the completions. We have
$$
	K_\infty = \Fq \left(\left(t^{-1}\right)\right) = \left\{  \sum\limits_{i \leq N} a_i t^i : a_i \in \Fq, N \in \mbZ\right\}.
$$
Set $\mbT = \left\{ \sum_{i \leq -1} a_i t^i | a_i \in \Fq \right\}$. Locally compact topological spaces have Haar measures, hence there is a (Haar) measure on $K_\infty$, and so on $\mbT$. 
This is normalised such that $\int_{\mbT} \diff\alpha = 1$ and is extended to $K_\infty$ in such a way that 
$$
	\int_{ \{\alpha \in K_\infty : \left |\alpha \right | <   \whN  \} } \diff\alpha =  q^N,
$$
for any positive integer $N$. Moreover, this can be extended to $\mbT^n$ and $K_\infty^n$ for any $n \in \mbZ_{>0}$. 
Denote by $\psi : K_\infty \to \mbC^*$ the non-trivial additive character on $K_\infty$, given by
$$
\sum_{i \leq N} a_i t^i \mapsto \exp\left(\frac{2\pi i \Tr_{\Fq/\Fp}(a_{-1})}{p}\right),
$$
where $q$ is a power of $p$.
Throughout this paper, for any real number $R$, let $\whR =q^R$. The following orthogonality property in \cite[Lemma 7]{Kubota} holds.
\begin{lemma} \label{KubotaLemma7} 
For any $N \in \mbZ_{\geq 0}$ and any $\gamma \in K_\infty$, we have 
$$
\sum_{\substack{b \in \xcal{O} \\ |b|<\whN }} \psi(\gamma b) = 
  		\begin{cases}
			\whN, &\text{if } |\gamma|<\whN ^{-1},\\
    			0, &\text{else}.
  		\end{cases}
$$
\end{lemma}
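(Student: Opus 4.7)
The plan is a direct character-orthogonality argument in the spirit of the classical proof of the analogous orthogonality relation on $\mathbb{Z}$. First I would expand both sides coordinatewise. Since the condition $|b|<\widehat N$ means $\deg b < N$, I write $b = b_0 + b_1 t + \cdots + b_{N-1}t^{N-1}$ with $b_j\in\Fq$, so the sum is an $n$-fold sum over $(b_0,\dots,b_{N-1})\in\Fq^N$. Writing $\gamma=\sum_{i\leq M}c_i t^i\in K_\infty$, I multiply out to find that the coefficient of $t^{-1}$ in $\gamma b$ is exactly
$$
\sum_{j=0}^{N-1} c_{-1-j}\, b_j.
$$
By the definition of $\psi$, only this coefficient matters (constant and higher terms of $\gamma b$ lie in $\mathcal O$ and so contribute trivially to $\psi$), and thus $\psi(\gamma b)=\prod_{j=0}^{N-1}\psi_0(c_{-1-j}b_j)$, where $\psi_0(x):=\exp(2\pi i\,\Tr_{\Fq/\Fp}(x)/p)$ is the standard additive character on $\Fq$.

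Second, I would use that the sum over $b$ factors as a product over the independent coordinates $b_j$:
$$
\sum_{\substack{b\in\mathcal O\\ |b|<\widehat N}} \psi(\gamma b) \;=\; \prod_{j=0}^{N-1}\Bigl(\,\sum_{b_j\in\Fq}\psi_0(c_{-1-j}b_j)\Bigr).
$$
Now each inner sum is a standard finite-field character sum: by orthogonality on the finite abelian group $(\Fq,+)$, $\sum_{b\in\Fq}\psi_0(cb)$ equals $q$ if $c=0$ and $0$ otherwise. (The non-vanishing of $\psi_0$ on $\Fq^\times$, which is what drives this orthogonality, follows because $\Tr_{\Fq/\Fp}\colon\Fq\to\Fp$ is surjective and $x\mapsto\exp(2\pi i x/p)$ is a non-trivial character on $\Fp$.)

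Third, I would assemble. The product equals $q^N=\widehat N$ exactly when every $c_{-1-j}$ vanishes for $0\leq j\leq N-1$, that is, $c_{-1}=c_{-2}=\cdots=c_{-N}=0$; in every other case at least one factor vanishes and the product is $0$. This vanishing condition on the coefficients of $t^{-1},\dots,t^{-N}$ in (the fractional part of) $\gamma$ is precisely the statement $|\gamma|<\widehat N^{-1}$, which gives the dichotomy claimed. There is no real obstacle here—the argument is a mechanical translation of the standard $\mathbb Z/\mathbb R$ orthogonality proof into the function-field setting—so the only point to be careful about is step one, namely verifying that $\psi(\gamma b)$ depends only on the "fractional part" of $\gamma$ so that polynomial tails of $\gamma$ do not spoil the bookkeeping.
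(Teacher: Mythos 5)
The paper gives no proof of this lemma (it is cited directly to Kubota's Lemma~7), so there is no internal argument to compare against; your character-orthogonality computation is the standard route and is correct through the factorisation of the sum and the evaluation of each coordinate factor. The only slip is in your final sentence. What your calculation establishes is that the sum equals $\whN$ precisely when $c_{-1}=\cdots=c_{-N}=0$, i.e.\ when the \emph{fractional part} $\{\gamma\}=\sum_{i\leq -1}c_it^i$ satisfies $|\{\gamma\}|<\whN^{-1}$. This is \emph{not} the same as $|\gamma|<\whN^{-1}$, which additionally forces $c_i=0$ for all $i\geq 0$. For instance, with $\gamma=t$ and $N=1$ one has $|\gamma|=q\geq q^{-1}$, yet $\psi(bt)=1$ for every $b\in\Fq$ and the sum equals $q=\whN$, not $0$. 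Your parenthetical ``(the fractional part of)'' shows you noticed the issue, but the clause ``is precisely the statement $|\gamma|<\whN^{-1}$'' then glosses over it.

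In fairness, the lemma as printed shares the same imprecision: read literally it fails for any $\gamma$ with a non-trivial polynomial part, and already for $N=0$ with any $\gamma\in\xcal{O}\setminus\{0\}$ (the sum is then $\psi(0)=1=\whN$, but $|\gamma|\geq 1$). The true content, which your argument does establish, is the statement with the hypothesis $\gamma\in\mbT$, or equivalently with $|\gamma|$ replaced by $\min_{m\in\xcal{O}}|\gamma-m|$. You should state one of these explicitly rather than asserting the equivalence with $|\gamma|<\whN^{-1}$; as it stands, that last equivalence is the one step of the proof that would not survive scrutiny.
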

The following lemma corresponds to \cite[Lemma 2.2]{BVBlue} and a proof can also be found in \cite[Lemma 1(f)]{Kubota}. 
\begin{lemma}\label{Lemma2.2inBlue}
Let $Y \in \mbZ$ and $\gamma \in K_\infty$. Then 
$$
\int_{|\alpha|<\whY }\psi(\alpha\gamma)\diff\alpha = 
  		\begin{cases}
			\whY,  &\text{if } |\gamma|<\whY ^{-1},\\
    			0, &\text{otherwise}.
  		\end{cases}
$$
Taking $Y=0$, it follows that
$$
\int_{\alpha \in \mbT}\psi(\alpha\gamma)\diff\alpha = 
  		\begin{cases}
			1, &\text{if } \gamma =0,\\
    			0, &\text{if } \gamma \in \xcal{O} \setminus \{0\}.
  		\end{cases}
$$
\end{lemma}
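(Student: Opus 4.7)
The plan is a standard translation-invariance argument for characters on a compact abelian group, adapted to the non-archimedean setting of $K_\infty$. Set $I = \int_{|\alpha|<\whY}\psi(\alpha\gamma)\diff\alpha$ and $B = \{\alpha \in K_\infty : |\alpha| < \whY\}$. Since $|\cdot|$ is non-archimedean, $B$ is an additive subgroup of $K_\infty$, and the Haar measure is translation-invariant on $B$. Substituting $\alpha \mapsto \alpha + \beta$ for any fixed $\beta \in B$ and using $\psi((\alpha+\beta)\gamma) = \psi(\alpha\gamma)\psi(\beta\gamma)$, I would obtain $I = \psi(\beta\gamma)\, I$. Hence either $I = 0$ or $\psi(\beta\gamma) = 1$ for every $\beta \in B$, and it remains to determine which case holds in terms of $|\gamma|$.

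In the case $|\gamma| < \whY^{-1}$, for any $\alpha \in B$ one has $|\alpha\gamma| \leq q^{Y-1}\cdot q^{-Y-1} = q^{-2}$, so the Laurent expansion of $\alpha\gamma$ vanishes in all coefficients of degree $\geq -1$. By the definition of $\psi$, this yields $\psi(\alpha\gamma) = 1$ identically on $B$, and the integral reduces to $\meas(B) = \whY$ by the normalisation of the Haar measure recalled earlier.

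In the complementary case $|\gamma| \geq \whY^{-1}$, the strategy is to produce a single $\beta \in B$ with $\psi(\beta\gamma) \neq 1$, which combined with the displayed identity $I = \psi(\beta\gamma)I$ forces $I = 0$. Writing $\gamma = \sum_{i \leq M} g_i t^i$ with $g_M \neq 0$, the hypothesis gives $M \geq -Y$, so the candidate $\beta = c\,t^{-M-1}$ satisfies $|\beta| = q^{-M-1} \leq q^{Y-1} < \whY$ for every $c \in \Fq$, and the coefficient of $t^{-1}$ in $\beta\gamma$ equals $c g_M$. Since $g_M \neq 0$ and $\Tr_{\Fq/\Fp}$ is a surjective $\Fp$-linear map, one can choose $c \in \Fq$ making $\Tr_{\Fq/\Fp}(cg_M) \not\equiv 0 \pmod p$, so $\psi(\beta\gamma) \neq 1$ as required.

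The second displayed formula follows immediately by specialising $Y = 0$, so that $\whY = 1$, $B = \mbT$, and $\whY^{-1} = 1$: if $\gamma = 0$ the integrand is constantly $1$ and the integral equals $\meas(\mbT) = 1$, while any $\gamma \in \xcal{O}\setminus\{0\}$ has $|\gamma| \geq 1$, landing in the vanishing regime of the first formula. There is no real obstacle here beyond two routine checks: that a non-archimedean ball is closed under addition (so translation-invariance applies), and that the surjectivity of the finite-field trace lets us realise a nontrivial character value from the single monomial $\beta = c\,t^{-M-1}$.
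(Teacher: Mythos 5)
Your proof is correct, and it follows the standard translation-invariance argument for character integrals over non-archimedean local fields. The paper itself offers no proof of this lemma, instead citing Kubota's Lemma 1(f) and Browning--Vishe's Lemma 2.2, both of which establish it by essentially this same route: the ball $B$ is an additive subgroup, translation invariance gives $I = \psi(\beta\gamma)I$, triviality of $\psi$ on $\gamma B$ is checked by a degree count when $|\gamma| < \whY^{-1}$, and a witness $\beta$ with nontrivial character value is exhibited via surjectivity of $\Tr_{\Fq/\Fp}$ when $|\gamma| \geq \whY^{-1}$. All the estimates in your write-up (in particular $|\alpha\gamma| \leq q^{-2}$, the bound $|\beta| \leq q^{Y-1}$, and the identification of the $t^{-1}$-coefficient of $\beta\gamma$ as $cg_M$) are accurate, and the specialisation to $Y = 0$ is handled correctly.
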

The next three results are standard, but are proved here since we require versions in which the implied constant is independent of $q$.
\begin{lemma} \label{tau(f)}
Let $\tau(f)$ be the number of monic divisors of a polynomial $f \in \Fq[t]$. Then for any $\varepsilon >0$ there exists a constant $C(\varepsilon)>0$, depending only on $\varepsilon$, such that $\tau(f) \leq C(\varepsilon) |f|^\varepsilon$.
\end{lemma}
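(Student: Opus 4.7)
The plan is to reduce the bound to a local, multiplicative statement at each prime and then handle the uniformity in $q$ by splitting into the ``large $q$'' range (where the bound is immediate) and the ``small $q$'' range (where only finitely many prime powers and finitely many problematic primes arise).

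First I would factor $f$ uniquely as $f = c \prod_i \varpi_i^{e_i}$ with $c \in k^\ast$ and the $\varpi_i$ distinct monic irreducibles, so that
\[
\frac{\tau(f)}{|f|^\varepsilon} \;=\; \prod_{i} \frac{e_i+1}{|\varpi_i|^{\varepsilon e_i}}.
\]
The estimate therefore reduces to controlling each local factor $(e+1)/|\varpi|^{\varepsilon e}$ separately. I would call a prime $\varpi$ \emph{large} if $|\varpi|^\varepsilon \geq 2$ and \emph{small} otherwise. Using the elementary inequality $e+1 \leq 2^e$ valid for every integer $e \geq 0$, every large prime contributes a factor
\[
\frac{e+1}{|\varpi|^{\varepsilon e}} \;\leq\; \left(\frac{2}{|\varpi|^\varepsilon}\right)^{e} \;\leq\; 1,
\]
so large primes can be discarded entirely.

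Next, for small primes the constraint $|\varpi|^\varepsilon < 2$ is equivalent to $\deg \varpi < (\log 2)/(\varepsilon \log q)$. If $q \geq 2^{1/\varepsilon}$, this forces $\deg \varpi < 1$, which is impossible; hence every prime is large and the bound $\tau(f) \leq |f|^\varepsilon$ holds with constant $1$. For $q < 2^{1/\varepsilon}$, there are only finitely many prime powers $q$ and, within each $k[t]$, only finitely many small primes (they lie in the finite set of monic irreducibles of bounded degree). For each such $\varpi$ the function $e \mapsto (e+1)/|\varpi|^{\varepsilon e}$ is eventually decreasing to $0$ (since $|\varpi|^\varepsilon > 1$), so its supremum $M_\varpi$ over $e \geq 0$ is finite. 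Taking the product $C(q,\varepsilon) = \prod_{\varpi \text{ small}} M_\varpi$ gives a finite upper bound for every $f \in \Fq[t]$, and setting
\[
C(\varepsilon) \;=\; \max_{q \leq 2^{1/\varepsilon}} C(q,\varepsilon)
\]
yields a constant depending only on $\varepsilon$.

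The only delicate point is ensuring the constant is independent of $q$ — the standard classical divisor bound argument gives a $q$-dependent constant a priori. The trick above bypasses this by observing that once $q$ exceeds $2^{1/\varepsilon}$ no ``problem primes'' remain, so the entire $q$-dependence is absorbed into a finite maximum over small prime powers. I do not expect any serious technical obstacle beyond the clean bookkeeping of these two regimes.
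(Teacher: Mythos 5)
Your proposal is correct, and the large/small prime decomposition is identical to the paper's (threshold $|\varpi|^\varepsilon \geq 2$, i.e.\ $\deg\varpi \geq (\log 2)/(\varepsilon\log q)$), but the way you handle the small primes is genuinely different. The paper proceeds constructively: it bounds each small-prime factor by $1 + \alpha/q^{d\alpha\varepsilon}$, maximizes $\alpha \mapsto \alpha/y^\alpha$ via calculus to get $1/(e\log y)$, invokes the prime-counting estimate $a_d \leq 2q^d/d$, and then uses $1+x \leq e^x$ together with $\sum_{d<D} q^d/d^2 < 2q^D/D$ to land on the explicit constant $C(\varepsilon) = \exp\!\left(2^{2+1/\varepsilon}/(e\log 2)\right)$. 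You instead make the key observation that once $q \geq 2^{1/\varepsilon}$ there are no small primes at all (since $\deg\varpi \geq 1$ forces $|\varpi|^\varepsilon \geq q^\varepsilon \geq 2$), so $C=1$ works in that range, and for the remaining finitely many prime powers $q < 2^{1/\varepsilon}$ the set of small primes in each $\Fq[t]$ is finite and each local supremum $M_\varpi = \sup_{e\geq 0}(e+1)/|\varpi|^{\varepsilon e}$ is finite since $|\varpi|^\varepsilon > 1$; a finite product followed by a finite max then yields $C(\varepsilon)$. Your route is shorter and avoids all the analytic estimates, at the cost of producing a non-explicit constant; both approaches correctly establish the $q$-uniformity that is the whole point of the lemma. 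One small remark: in the finite-max step it is worth noting that each $M_\varpi \geq 1$ (the $e=0$ term is $1$), so enlarging the product to run over all small primes — rather than only those dividing $f$ — only increases it, which is what justifies the uniform bound $\prod_{\varpi\text{ small}} M_\varpi$ over all $f$.
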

\begin{proof}
First note that 
$$
\frac{\tau(f)}{|f|^\varepsilon}  =  \prod_{\varpi^\alpha \mid \mid f} \frac{\alpha +1}{|\varpi^\alpha|^\varepsilon} = \prod_{\substack{\varpi^\alpha \mid \mid f\\ |\varpi| < 2^{1/\varepsilon} }} \frac{\alpha+1}{|\varpi|^{\alpha\varepsilon}}   \prod_{\substack{\varpi^\alpha \mid \mid f\\ |\varpi| \geq 2^{1/\varepsilon} }} \frac{\alpha+1}{|\varpi|^{\alpha\varepsilon}},
$$
where $\varpi$ denotes a prime in $\xcal{O}$. The second factor is less than or equal to 1. 
In the first factor $ |\varpi| < 2^{1/\varepsilon}$, which is equivalent to $d := \deg(\varpi) < \frac{1}{\varepsilon}\frac{\log 2}{\log q} =:D$. Then,
$$
	 \prod_{\substack{\varpi^\alpha \mid \mid f\\ |\varpi| < 2^{1/\varepsilon} }} \frac{\alpha+1}{|\varpi|^{\alpha\varepsilon}} = \prod_{d<D}  \prod_{\substack{\varpi^\alpha \mid \mid f\\ |\varpi| = q^d }} \frac{\alpha+1}{q^{d\alpha\varepsilon}}  \leq \prod_{d<D}  \prod_{\substack{\varpi^\alpha \mid \mid f\\ |\varpi| = q^d }}\left( 1 + \frac{\alpha}{q^{d\alpha\varepsilon}} \right).
$$
Now, if $g(\alpha) = \frac{\alpha}{y^\alpha}$, then $g'(\alpha) =y^{-\alpha}(1 - \alpha \log y)$. Thus, $g$ is maximised at $\alpha = \frac{1}{\log y}$ when $g\left( \frac{1}{\log y}\right) = \frac{1}{e\log y}$. Thus,
$$
	 \prod_{\substack{\varpi^\alpha \mid \mid f\\ |\varpi| < 2^{1/\varepsilon} }} \frac{\alpha+1}{|\varpi|^{\alpha\varepsilon}} \leq \prod_{d<D}  \prod_{|\varpi| = q^d }\left( 1 + \frac{1}{e \log q^{d\varepsilon}} \right) \leq \prod_{d<D}  \left( 1 + \frac{1}{e \log q^{d\varepsilon}} \right)^{\frac{2q^d}{d}},
$$ 
since by \cite[Chapter 2]{Rosen}, the number $a_d$ of primes of degree $d$ satisfies 
\begin{equation} \label{eq: number of primes}
\left |a_d - \frac{q^d}{d} \right| \leq \frac{q^{\frac{d}{2}}}{d} + q^{\frac{d}{3}}.
\end{equation} 
Then, using $1+x \leq e^x$, we obtain
$$
	 \prod_{\substack{\varpi^\alpha \mid \mid f\\ |\varpi| < 2^{1/\varepsilon} }} \frac{\alpha+1}{|\varpi|^{\alpha\varepsilon}} \leq \prod_{d<D}  \left(\exp \left(\frac{1}{e \log q^{d\varepsilon}} \right)\right)^{\frac{2q^d}{d}} = \exp \left ( 2\sum_{d < D} \frac{q^d}{d^2} \cdot\frac{1}{e\varepsilon \log q}  \right).
$$
Now, $ q^d/d^2 $ is increasing with $d$ for $q \geq 4$ and thus, in this case we have $\sum_{d < D} q^d/d^2 < q^D/D$. In fact, we have $\sum_{d < D} q^d/d^2 < 2q^D/D$, for any $q \geq 2$. Thus, 
$$
	 \prod_{\substack{\varpi^\alpha \mid \mid f\\ |\varpi| < 2^{1/\varepsilon} }} \frac{\alpha+1}{|\varpi|^{\alpha\varepsilon}} \leq \exp \left ( \frac{4q^D}{D} \cdot\frac{1}{e\varepsilon \log q}  \right) = \exp \left ( \frac{2^{2+1/\varepsilon}}{e\log 2} \right),
$$
which concludes the proof. 
\end{proof}
\begin{lemma}\label{Lemma: tau_k} 
Let $\omega(f)$ denote the number of prime divisors of a polynomial $f \in \Fq[t]$. Then for any $\varepsilon >0$ and any integer $k \geq 2$, we have $k^{\omega(f)} \ll_{\varepsilon,k} |f|^\varepsilon$.
\end{lemma}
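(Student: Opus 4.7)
The plan is to reduce the bound on $k^{\omega(f)}$ to the bound on $\tau(f)$ already established in Lemma \ref{tau(f)}, thereby avoiding a direct computation and automatically inheriting the $q$-independence of the implied constant.

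First I would observe the elementary inequality $2^{\omega(f)} \leq \tau(f)$, which holds because $2^{\omega(f)}$ is precisely the number of squarefree monic divisors of $f$, a subset of all monic divisors of $f$. Then, setting $m = \lceil \log_2 k \rceil$, I would write $k \leq 2^m$, so that
$$
k^{\omega(f)} \leq 2^{m \omega(f)} = \left(2^{\omega(f)}\right)^m \leq \tau(f)^m.
$$

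Now I would apply Lemma \ref{tau(f)} with parameter $\varepsilon' = \varepsilon/m$ (which depends only on $\varepsilon$ and $k$) to obtain a constant $C(\varepsilon') > 0$ such that $\tau(f) \leq C(\varepsilon') |f|^{\varepsilon'}$ for every $f \in \Fq[t]$. Raising this to the $m$-th power yields
$$
k^{\omega(f)} \leq \tau(f)^m \leq C(\varepsilon')^m |f|^{m \varepsilon'} = C(\varepsilon')^m |f|^{\varepsilon},
$$
and the constant $C(\varepsilon')^m$ depends only on $\varepsilon$ and $k$, as required.

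There is no serious obstacle here: once the inequality $2^{\omega(f)} \leq \tau(f)$ is in place, the $q$-uniformity is guaranteed by the fact that Lemma \ref{tau(f)} was already proved with a $q$-independent implied constant. The only mild care is in choosing $\varepsilon'$ so that the exponent of $|f|$ comes out to $\varepsilon$ rather than something larger, and in checking that $m$ depends only on $k$.
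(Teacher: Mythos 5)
Your proof is correct, and it takes a cleaner route than the one in the paper. The paper's argument introduces the $k$-fold divisor function $\tau_k(f)$, computes the product formula $\tau_k(f)=\prod_j\binom{a_j+k-1}{a_j}$ to establish $k^{\omega(f)}\le\tau_k(f)$, and then proves $\tau_k(f)\ll_\varepsilon |f|^\varepsilon$ by induction on $k$ via the recurrence $\tau_k(f)=\sum_{d\mid f}\tau_{k-1}(f/d)$ (with base case Lemma~\ref{tau(f)}). You instead observe $2^{\omega(f)}\le\tau(f)$ (squarefree divisors form a subset of all divisors) and raise to the power $m=\lceil\log_2 k\rceil$ to get $k^{\omega(f)}\le\tau(f)^m$, then invoke Lemma~\ref{tau(f)} once with $\varepsilon'=\varepsilon/m$. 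Both reductions land on Lemma~\ref{tau(f)}, so both inherit $q$-uniformity automatically; the differences are that your argument avoids introducing an auxiliary multiplicative function and an induction, which makes it shorter and self-contained, whereas the paper's route establishes, as a by-product, the more structured statement $\tau_k(f)\ll_{\varepsilon,k}|f|^\varepsilon$ for the $k$-fold divisor function itself (which could in principle be reused elsewhere). For the lemma as stated your version is at least as good, and the bookkeeping — $m$ depends only on $k$, $\varepsilon'$ only on $\varepsilon$ and $k$ — is handled correctly.
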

\begin{proof}
Let $\tau_k(f)$ denote the number of factorisations of a polynomial $f \in \Fq[t]$ into $k$ factors. Write $f = \varpi_1^{a_1} \dots \varpi_m^{a_m}$, where $\varpi_i$ are distinct primes in $\Fq[t]$. Then,
$$
\tau_k(f) = \prod_{j=1}^{m}\left(  \begin{matrix} a_j + k - 1\\ a_j \end{matrix} \right) = \prod_{j=1}^{m} \frac{(a_j+k-1) \dots (a_j +1)}{(k-1)!} \geq  \prod_{j=1}^{m} \frac{k! }{(k-1)!} = k^m. 
$$
Thus, $\tau_k(f) \geq k^{\omega(f)}$. We will prove $\tau_k(f) \ll_{\varepsilon} |f|^\varepsilon$, by induction. For $k = 2$, the result follows from Lemma \ref{tau(f)}. 
For $k >2$, use the fact that $\tau_k(f) = \sum_{d \mid f} \tau_{k-1}\left(f/d_k\right)$.
\end{proof}
\begin{lemma}\label{TypicalSum1} Let $Y \in \mbN$. Then
	$$
	\sum_{\substack{ m \in \xcal{O}\\ |m| \leq \whY \\ m \textrm{ monic} }} \frac{1}{|m|} =Y+1.
	$$
\end{lemma}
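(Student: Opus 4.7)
The plan is to split the sum according to the degree of $m$. Every monic polynomial $m \in \xcal{O} = \Fq[t]$ has a well-defined degree $d = \deg(m) \geq 0$, and the condition $|m| \leq \whY = q^Y$ is equivalent to $d \leq Y$. So I would rewrite
\[
\sum_{\substack{ m \in \xcal{O}\\ |m| \leq \whY \\ m \textrm{ monic} }} \frac{1}{|m|}
= \sum_{d=0}^{Y} \sum_{\substack{ m \in \xcal{O}\\ \deg(m) = d \\ m \textrm{ monic} }} \frac{1}{|m|}.
\]

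The inner sum is easy: a monic polynomial of degree $d$ over $\Fq$ has the form $t^d + c_{d-1}t^{d-1} + \cdots + c_0$ with $c_i \in \Fq$ free, giving exactly $q^d$ such polynomials. Each of them satisfies $|m| = q^d$, hence $1/|m| = q^{-d}$. Therefore the inner sum equals $q^d \cdot q^{-d} = 1$.

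Summing over $d$ from $0$ to $Y$ then yields $Y+1$, as claimed. There is no real obstacle here; the only thing to be careful about is including the $d=0$ term (the constant polynomial $m=1$, which contributes $1$) so that the total count of values of $d$ is exactly $Y+1$ rather than $Y$.
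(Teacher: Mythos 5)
Your proof is correct and follows the same route as the paper: grouping monic polynomials by degree, noting there are $q^d$ of degree $d$, each contributing $q^{-d}$, so each degree contributes $1$ to the sum, for a total of $Y+1$.
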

\begin{proof}
	We have
	$$
		\sum_{\substack{ m \in \xcal{O}\\ |m| \leq \whY \\ m \text{ monic} }} \frac{1}{|m|}  = \sum_{n=0}^Y\frac{1}{q^n} \# \left \{ m \in \xcal{O} : |m| = q^n, m \text{ monic} \right \} = Y+1, 
	$$	
as claimed.  	
\end{proof}
\section{The circle method over function fields}\label{CircleMethod}
Recall that $k=\Fq$ has characteristic $>3$ and $F \in k[x_1, \ldots, x_n]$ denotes a non-singular homogeneous polynomial of degree $3$. Moreover, let $X \subset \mbP_k^{n-1}$ be the smooth cubic hypersurface defined by $F=0$, and let  $\v{a}, \v{b} \in \Fq^n \setminus \{\v{0}\}$ such that $F(\v{a})=F(\v{b})=0$, $ H(\v{b}) \neq 0$. 
We want $x_i \in \Fq[t]$ such that $F(\v{x})=0$ and
	\begin{align}
	\v{x}(0) &=\v{a}, \label{eq:a}\\
	\v{x}(\infty) &=\v{b}. \label{eq:b}
	\end{align}
Now, $t$ is a prime in $\Fq[t]$, and $s=t^{-1}$ is the prime at infinity. Moreover,
$$
x_i	= \sum_{0\leq j \leq d} x_{ij}t^j = t^d\left(\sum_{0\leq j \leq d} x_{ij}t^{j-d} \right) = t^d\left(\sum_{0\leq j \leq d} x_{ij}s^{d-j} \right) = t^dy_i,
$$
say, for $x_{ij} \in \Fq$. Then $y_i=t^{-d}x_i$ and (\ref{eq:a}) is equivalent to $ \v{x} \equiv \v{a} \Mod t$, while (\ref{eq:b}) is equivalent to $\v{y} \equiv \v{b} \Mod s$. 

Define a weight function $\omega : K_\infty^n \to \mbR_{\geq 0}$ such that 
$$ 
\omega(\v{x})  = 
  		\begin{cases}
			1, &\text{if } |t\v{x} -\v{b}|< 1,\\
    			0, &\text{otherwise}.
  		\end{cases}
$$
This is the weight function $w(t^L(\v{x}-\v{x}_0))$ defined in \cite[(7.2)]{BVBlue}, with $\v{x}_0 = t^{-1}\v{b}$ and $L=1$. We now check that \cite[(7.1)]{BVBlue} and \cite[(7.3)]{BVBlue} hold. Recall that $H(\v{x}) = \det \v{H}(\v{x})$ and note that
$F(t^{-1}\v{b}) = t^{-3}F(\v{b}) = 0$, $H(t^{-1}\v{b}) = H(t^{-1}\v{b}) = t^{-n} H(\v{b}) \neq 0,$ and $|t^{-1}\v{b}| =1/q < 1$. 
Furthermore, $|\v{x}| <1$ for any $\v{x} \in K_\infty^n$ such that $\omega(\v{x}) \neq 0$. Moreover, any $\v{x} \in K_\infty^n$ such that $\omega(\v{x}) \neq 0$ can be written in the form $\v{x} = t^{-1}(\v{b} + \v{z})$, where $\v{z}\in \mbT^n$. Then, for any $\v{x} \in K_\infty^n$ such that $\omega(\v{x}) \neq 0$,
$$
|\det \v{H}(\v{x}) | = |H(t^{-1}(\v{b} + \v{z}))| = q^{-n} |H(\v{b}) + \v{z} .\nabla H(\v{b}) + \ldots | =q^{-n},
$$
since $H(\v{b}) \in \Fq^*$ and $\v{z} \in \mbT^n$. But $ |H(t^{-1}\v{b})| = |t^{-n} H(\v{b})| = q^{-n}$, and thus $|H(\v{x}) | = |H(t^{-1}\v{b})|$, for any $\v{x} \in K_\infty^n$ such that $\omega(\v{x}) \neq 0$. This confirms that \cite[(7.1) and (7.3)]{BVBlue} hold.

We have
$$ 
N_{a,b}(d) = \Sum_{\substack{\v{x}\in \xcal{O}^n \\ F(\v{x})=0 \\ \v{x} \equiv \v{a} \Mod t }}\omega\left(\frac{\v{x}}{t^{d+1}}\right),
$$
where $a$ and $b$ are the corresponding points of $\v{a}$ and $\v{b}$ in $X(k)$. We remark that any $\v{x}$ in the sum has $|\v{x}| = q^d$. To simplify notation, we write $N_{a,b}(d) = N(d)$ and $P = t^{d+1}$. Then, $\omega(\v{x}/P)\neq0$ implies that $t^{-d} \v{x} \equiv \v{b} \Mod s$.
Define
$$
S(\alpha) = \Sum_{\substack{\v{x}\in \xcal{O}^n \\ \v{x} \equiv \v{a} \Mod t  }} \psi(\alpha F(\v{x})) \omega \left(\frac{\v{x}}{P}\right).
$$
Then, by Lemma \ref{Lemma2.2inBlue}, we have
$$
N(d) = \int_{\alpha \in \mbT} S(\alpha)\diff \alpha.
$$
By \cite[Lemma 4.1]{BVBlue}, $\mbT$ can be partitioned into a union of intervals centred at rationals and since $K$ is non-archimedean, the intervals do not overlap. 
Thus, for any $Q \geq 1$, we have
\begin{equation}\label{Eq4.1inBlue}
N(d) = \Sum_{\substack{r \in \xcal{O} \\ |r| \leq \whQ  \\ r \text{ monic}}} \quad \Suma \int_{|\theta|<\frac{1}{|r|\whQ }} S\left( \frac{a}{r} + \theta \right) \diff \theta,
\end{equation}
where $\sum\nolimits^*$ denotes a restriction to $(a,r) =1$. We shall take $Q = \frac{3(d+1)}{2}$ in our work.
We now note that $S(a/r + \theta)$ is the same as the exponential sum $S(a/r + \theta)$ appearing in \cite[pg.\ 690]{BVBlue}, where $\v{b}$ is $\v{a}$ and $M=t$. Define $r_M = rM/(r,M)$, for any $M$, 
\begin{align}
S_{r, M, \v{a}}\left( \v{c} \right)= \Suma \Sum_{\substack{\v{y}\in \xcal{O}^n\\ |\v{y}| < |r_M|\\ \v{y} \equiv \v{a} \Mod M}} \psi \left( \frac{aF(\v{y})}{r}\right) \psi\left(\frac{-\v{c}.\v{y}}{r_M}\right), \label{def:SrMa}\\
I_{r}\left(\theta; \v{c}\right) = \int_{K_\infty^n} \omega \left(\v{u}\right) \psi \left( \theta P^3 F\left(\v{u} \right) + \frac{P\v{c}.\v{u}}{r}\right) \diff\v{u}.\label{def:Ir}
\end{align}
\begin{lemma}\label{Lemma4.4inBlue} 
Let $P=t^{d+1}$. We have
\begin{equation}\label{N(d)2}
N(d) = |P|^n  \Sum_{\substack{r \in \xcal{O}\\ |r| \leq \whQ  \\ r \text{ monic}}}  |\tilde{r}|^{-n} \int_{|\theta|<\frac{1}{|r|\whQ }}  \Sum_{\substack{\v{c}\in \xcal{O}^n \\ |\v{c}|\leq \whC}} S_{r, t, \v{a}}\left( \v{c} \right) I_{\tilde{r}}\left(\theta; \v{c}\right) \diff\theta,
\end{equation}
where 
$$
\tilde{r} = \frac{rt}{(r,t)} =
\begin{cases}
rt, &\text{ if } t \nmid r,\\
r, &\text{ otherwise}.
\end{cases}
$$
and $\whC = q |\tilde{r}||P|^{-1} \max\left\{1, |\theta| |P|^3  \right\}$. 
\end{lemma}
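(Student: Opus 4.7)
The plan is to mimic the proof of \cite[Lemma 4.4]{BVBlue}, adapted to our setup in which $\omega$ is localised near $t^{-1}\v{b}$ rather than the origin and in which the additional congruence $\v{x}\equiv\v{a}\pmod t$ must be carried through. The two main ingredients are a residue-class decomposition modulo $\tilde r$ followed by Poisson summation over $\xcal{O}^n$.

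First I would fix $r$ monic with $|r|\leq\whQ$, $a\in\xcal{O}$ coprime to $r$, and $\theta$ in the short interval appearing in (\ref{Eq4.1inBlue}). Since $t\mid\tilde r$, every $\v{x}\in\xcal{O}^n$ with $\v{x}\equiv\v{a}\pmod t$ decomposes uniquely as $\v{x}=\v{y}+\tilde r\v{z}$, where $\v{y}$ ranges over representatives modulo $\tilde r$ satisfying $\v{y}\equiv\v{a}\pmod t$ and $\v{z}\in\xcal{O}^n$. Because $r\mid\tilde r$, the congruence $F(\v{y}+\tilde r\v{z})\equiv F(\v{y})\pmod r$ lets us factor $\psi(aF(\v{y})/r)$ out of the sum over $\v{z}$.

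Next I would apply the function-field Poisson summation formula to the remaining inner sum $\sum_{\v{z}\in\xcal{O}^n}\psi(\theta F(\v{y}+\tilde r\v{z}))\,\omega((\v{y}+\tilde r\v{z})/P)$. Substituting first $\v{u}\mapsto(\v{u}-\v{y})/\tilde r$ (Jacobian $|\tilde r|^{-n}$, which releases a factor $\psi(\v{c}.\v{y}/\tilde r)$) and then rescaling by $P$ (Jacobian $|P|^n$), the remaining integral is exactly $I_{\tilde r}(\theta;-\v{c})$. After the trivial reindexing $\v{c}\mapsto-\v{c}$ and summing over $a$ coprime to $r$, the inner complete sum over $\v{y}$ and $a$ is precisely $S_{r,t,\v{a}}(\v{c})$ from (\ref{def:SrMa}), giving
$$
\sum_a^{*} S\!\left(\frac{a}{r}+\theta\right) = \frac{|P|^n}{|\tilde r|^n}\sum_{\v{c}\in\xcal{O}^n} S_{r,t,\v{a}}(\v{c})\, I_{\tilde r}(\theta;\v{c}).
$$

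The main obstacle is justifying the truncation to $|\v{c}|\leq\whC$. For this I would partition the support $\{|\v{u}-t^{-1}\v{b}|<q^{-1}\}$ of $\omega$ into balls of radius $\rho=1/(q\max\{1,|\theta||P|^3\})$. On such a ball the ultrametric bound $|F(\v{u}+\v{v})-F(\v{u})|\leq|\v{v}|$ (valid since $F$ has $\Fq$-coefficients and $|\v{u}|\leq 1$) combined with the fact that $\psi$ is trivial on $\{|y|<q^{-1}\}$ shows that $\psi(\theta P^3 F(\v{u}))$ is constant in $\v{u}$. Hence $I_{\tilde r}(\theta;\v{c})$ decomposes into a sum of terms of the form $\psi(\mathrm{const})\int_{|\v{v}|<\rho}\psi(P\v{c}.\v{v}/\tilde r)\diff\v{v}$, each of which vanishes by Lemma \ref{Lemma2.2inBlue} as soon as $|P\v{c}/\tilde r|\geq\rho^{-1}=q\max\{1,|\theta||P|^3\}$, equivalently $|\v{c}|>\whC$. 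Carefully tracking how the shift to $t^{-1}\v{b}$ interacts with the phase, and verifying that the linear-in-$\v{v}$ oscillation from $\v{c}$ indeed dominates once $|\v{c}|>\whC$, is the delicate part that must be handled exactly as in \cite[\S4]{BVBlue}.
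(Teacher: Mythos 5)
Your proposal is correct and follows the same two-step logic as the paper's proof: residue-class decomposition modulo $\tilde r$ followed by function-field Poisson summation, then a vanishing criterion that truncates the dual sum to $|\v{c}|\leq\whC$. The only difference is one of packaging. The paper simply cites \cite[Lemma 4.4]{BVBlue} for the Poisson reduction, and then combines \cite[(7.7)]{BVBlue} with \cite[Lemma 2.6]{BVBlue} for the truncation, after re-centering the ball at $t^{-1}\v{b}$ so that $I_{\tilde r}(\theta;\v{c})$ becomes a multiple of $J_G\bigl(\theta P^3;\,Pt^{-1}\v{c}/\tilde r\bigr)$ with $G(\v{v})=F(t^{-1}\v{b}+t^{-1}\v{v})$. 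You instead prove the vanishing of $I_{\tilde r}(\theta;\v{c})$ directly by tiling the support of $\omega$ with $\rho$-balls on which the cubic phase is constant and invoking Lemma \ref{Lemma2.2inBlue} on each; this is exactly the mechanism inside \cite[Lemma 2.6]{BVBlue}, so the underlying argument is the same, just made self-contained. Two small points worth tightening: with the paper's definition (\ref{def:SrMa}), which already carries the factor $\psi(-\v{c}.\v{y}/r_M)$, the Poisson step produces $I_{\tilde r}(\theta;\v{c})$ without any reindexing; and the vanishing you extract actually holds already for $|\v{c}|\geq\whC$ rather than merely $|\v{c}|>\whC$ (since $|\v{c}|$ and $\whC$ are both powers of $q$), though either version is compatible with the stated, slightly generous, truncation $|\v{c}|\leq\whC$.
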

\begin{proof}
Applying \cite[(7.7)]{BVBlue} with $\v{x}_0 = t^{-1}\v{b}$ and $L=1$, we have
$$
I_{\tilde{r}}\left(\theta; \v{c}\right) = \frac{1}{q^n}  \psi \left( \frac{P\v{c}.\v{b}}{\tilde{r}t}\right) J_G\left( \theta P^3; \frac{Pt^{-1}\v{c}}{\tilde{r}}\right),
$$
where $G(\v{v}) = F \left( t^{-1}\v{b} + t^{-1}\v{v}\right)$ and 
$$
J_G\left( \theta P^3; \frac{Pt^{-1}\v{c}}{\tilde{r}}\right) = \int_{\mbT^n}\psi (\theta P^3G(\v{x})+\frac{Pt^{-1}\v{c}.\v{x}}{\tilde{r}})\diff \v{x}, 
$$
using the notation in \cite[(2.4)]{BVBlue}. According to \cite[Lemma 2.6]{BVBlue} we have 
$$
J_G\left(\theta P^3 ; \frac{Pt^{-1}\v{c}}{\tilde{r}}\right) = 0
$$
if $|\v{c}| > q |\tilde{r}| |P|^{-1} \max\left\{1, | \theta| |P|^3 \right \} $. Now apply \cite[Lemma 4.4]{BVBlue}.
\end{proof}
We note that $C \in \mbZ$. Our strategy is now to go through the remaining arguments in \cite[Sections 4 -- 9]{BVBlue} for our particular exponential sums and integrals, paying special attention to the uniformity in the $q$-aspect. Furthermore, we keep the same notation as in \cite[Definition 4.6]{BVBlue} for the factorisation of any $r \in \xcal{O}$. Thus, for any $j \in \mbZ_{>0}$ we have $ r = r_{j+1} \prod_{i=1}^{j}b_i = r_{j+1} \prod_{i=1}^j k_i^i$, with $(j+1)$-full $r_{j+1}$, where for any $i \in \mbZ_{>0}$ we have
$$
b_i = \prod_{\varpi^i \mid\mid r} \varpi^i, \quad k_i = \prod_{\varpi^i \mid\mid r} \varpi, \quad r_i = \prod_{\substack{\varpi^e \mid\mid r \\ e \geq i} } \varpi^e.
$$  
\subsection{Exponential Sums}\label{Sum}
We continue to assume that $\ch\left(\Fq\right)>3$. Moreover, we note that $S_{r,M,\v{a}}(\v{c})$ satisfies the multiplicativity property recorded in \cite[Lemma 4.5]{BVBlue}. We are interested in the case when $M \mid t$.
\begin{lemma}\label{Lemma4.5inBlue} 
Let $r=uv$ for coprime $u,v\in \xcal{O}$ and $t \nmid u$. Then there exist non-zero $\v{a}',\v{a}''\in k^n$, depending on $\v{a}$ and the residues of $u,v$ modulo $t$, such that 
$$
S_{r,M,\v{a}}(\v{c})
=
\begin{cases}
S_{u,1,\v{0}}(\v{c}) S_{v,1,\v{0}}(\v{c}), &\text{if } M=1,\\
S_{u,1,\v{0}}(\v{c}) S_{v,t,\v{a}'}(\v{c}), &\text{if } M=t \text{ and } t\mid r,\\
S_{u,1,\v{0}}(\v{c}) S_{v,1,\v{0}}(\v{c})\psi(\frac{-\v{c}.\v{a}''}{t}), &\text{if } M=t \text{ and } t\nmid r.
\end{cases}
$$
\end{lemma}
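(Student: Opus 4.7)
The plan is to perform an explicit Chinese Remainder Theorem decomposition of the sum $S_{r,M,\v{a}}(\v{c})$ along the coprime factorisation $r = uv$ (with $t$ inserted in case (iii)), choosing the parametrisation so that the linear phase decomposes with no Bezout coefficients. Specifically, in cases (i) and (ii) I would use the bijection
$$(\v{y}_1, \v{y}_2) \longleftrightarrow \v{y} := v\v{y}_1 + u\v{y}_2 \Mod{uv},$$
valid since $(u,v) = 1$, which gives $\v{c}\cdot\v{y}/(uv) \equiv \v{c}\cdot\v{y}_1/u + \v{c}\cdot\v{y}_2/v \Mod 1$ directly. In case (iii), where $(uv, t) = 1$, I would use the triple analogue $\v{y} = vt\v{y}_1 + ut\v{y}_2 + uv\v{y}_3 \Mod{uvt}$, producing the clean splitting $\v{c}\cdot\v{y}/(uvt) \equiv \v{c}\cdot\v{y}_1/u + \v{c}\cdot\v{y}_2/v + \v{c}\cdot\v{y}_3/t \Mod 1$.

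The cubic phase is then handled by homogeneity of $F$ of degree $3$: from $\v{y} \equiv v\v{y}_1 \Mod u$ one gets $F(\v{y}) \equiv v^3 F(\v{y}_1) \Mod u$, and analogously modulo $v$. Combined with a Bezout relation $Av + Bu = 1$ (so $Av^3 \equiv v^2 \Mod u$ and $Bu^3 \equiv u^2 \Mod v$), this yields
$$\frac{aF(\v{y})}{uv} \equiv \frac{av^2 F(\v{y}_1)}{u} + \frac{au^2 F(\v{y}_2)}{v} \Mod 1.$$
Since $v^2$ is a unit modulo $u$ and $u^2$ a unit modulo $v$, the reparametrisation $a_1 := av^2 \Mod u$, $a_2 := au^2 \Mod v$ is a bijection $(\xcal{O}/r)^* \to (\xcal{O}/u)^* \times (\xcal{O}/v)^*$, splitting the $a$-sum as a product. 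In case (iii) the same step uses $av^2 t^3 \Mod u$ and $au^2 t^3 \Mod v$, which are again bijective since $t$ is a unit modulo both $u$ and $v$.

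The three cases then differ only in how the congruence $\v{y} \equiv \v{a} \Mod t$ is resolved. In case (i) it is absent, and the decomposition yields $S_{u,1,\v{0}}(\v{c}) S_{v,1,\v{0}}(\v{c})$. In case (ii), since $t \nmid u$ and $t \mid r$ together force $t \mid v$, the identity $\v{y} \equiv u\v{y}_2 \Mod t$ converts the constraint into $\v{y}_2 \equiv \v{a}' \Mod t$ with $\v{a}' := u^{-1}\v{a} \Mod t \in k^n$, so the $\v{y}_2$-sum becomes $S_{v,t,\v{a}'}(\v{c})$ while the $\v{y}_1$-sum remains $S_{u,1,\v{0}}(\v{c})$. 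In case (iii) the triple CRT identity $\v{y} \equiv uv\v{y}_3 \Mod t$ fixes $\v{y}_3 = \v{a}'' := (uv)^{-1}\v{a} \Mod t$; this collapses the $\v{y}_3$-sum to the single factor $\psi(-\v{c}\cdot\v{a}''/t)$, while the $\v{y}_1, \v{y}_2$ sums factor as in case (i).

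The main obstacle is more cosmetic than conceptual: one must choose the CRT parametrisation carefully to avoid Bezout coefficients in the linear phase, and then correctly track the unit factors $v^2, u^2, t^3$ arising in the cubic phase and absorb them into the reparametrised $a$-variables. The fact that $F$ is homogeneous of exact degree $3$ is essential here, since it ensures $F(c\v{y}) = c^3 F(\v{y})$ with $c^3$ a unit in the relevant residue ring.
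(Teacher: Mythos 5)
Your proof is correct. The paper gives no argument of its own here (it simply cites \cite[Lemma~4.5]{BVBlue} for this multiplicativity), and your CRT decomposition---using the parametrisation $\v{y}=v\v{y}_1+u\v{y}_2$ (resp.\ the triple version with $t$) so the linear phase splits cleanly, absorbing the Bezout-derived unit factors $v^2,u^2,t^3$ into the $a$-variable via homogeneity of $F$, and resolving $\v{y}\equiv\v{a}\Mod t$ in the $\v{y}_2$- or $\v{y}_3$-coordinate according to whether $t\mid v$ or $(t,uv)=1$---is exactly the standard argument underlying the cited result, correctly specialised to $M\in\{1,t\}$.
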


 Furthermore, the estimates in \cite[Lemma 5.1]{BVBlue}, \cite[(5.2)]{BVBlue} and \cite[(5.3)]{BVBlue} all hold and are independent of $q$. Next we record the following result. 
\begin{lemma}\label{Lemma6.4inBlue}
	Let $\v{r} \in K_\infty^n$, $C \in \mbN$, $M \in \xcal{O}$ and $\varepsilon >0$. Then there exists a constant $c_{n, \varepsilon} >0$, depending only on $n$ and $\varepsilon$, such that
	$$
	\Sum_{\substack{\v{c} \in \xcal{O}^n \\ |\v{c} - \v{r}|<\whC}} |S_{r_3, M, \v{a}}(\v{c})| \leq c_{n, \varepsilon} |M|^n |r_3|^{n/2+1+\varepsilon}\left( |r_3|^{n/3} + \whC^n \right).
	$$
\end{lemma}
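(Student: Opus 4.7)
The plan is to follow the template of [BVBlue, Lemma 6.4], adapting the argument to accommodate the congruence condition $\v{y} \equiv \v{a} \bmod M$ where $M \mid t$. The idea is to factor $S_{r_3, M, \v{a}}(\v{c})$ into prime-power pieces by multiplicativity, bound each piece pointwise by Weil-type estimates, and then handle the sum over $\v{c}$ by a completion argument that exploits the 3-fullness of $r_3$.

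First, I invoke Lemma \ref{Lemma4.5inBlue} (and its natural extension to arbitrary factorisations) to write $S_{r_3, M, \v{a}}(\v{c})$ as a product of sums indexed by prime powers $\varpi^{e_\varpi} \| r_3$, where $e_\varpi \geq 3$ by the 3-full hypothesis. The prime $t$, if it divides $r_3$, contributes a factor with the congruence twist involving $\v{a}'$ (or the character $\psi(-\v{c}.\v{a}''/t)$), while the remaining primes contribute sums of the form $S_{\varpi^{e_\varpi}, 1, \v{0}}(\v{c})$; the $t$-factor contributes the $|M|^n$ since it involves a sum over $|\v{y}| < |\tilde r_3|$ rather than $|r_3|$.

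Second, I apply the pointwise square-root cancellation bound from [BVBlue, Lemma 5.1] (which holds uniformly in $q$) to each prime-power factor, yielding $|S_{\varpi^{e_\varpi}, *, *}(\v{c})| \ll |\varpi|^{e_\varpi(n/2 + 1 + \varepsilon)}$ after absorbing a $\tau$-type loss via Lemma \ref{tau(f)}. Multiplying over $\varpi$ produces the factor $|r_3|^{n/2 + 1 + \varepsilon}|M|^n$. Third, I handle the $\v{c}$-sum by splitting into two regimes. In the regime $\whC \geq |r_3|^{1/3}$, I complete the sum by counting $\v{c}$'s directly in the box, giving at most $\whC^n$ terms. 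In the regime $\whC < |r_3|^{1/3}$, the nonzero contributions are restricted by the support condition dictated by the $\v{c}$-dependence of the prime-power sums: since $S_{\varpi^e, 1, \v{0}}(\v{c})$ depends on $\v{c}$ only modulo a divisor of $\varpi^e$, invoking the standard support/vanishing property for 3-full moduli (as in [BVBlue, §5]) reduces the contributing $\v{c}$'s to a sublattice of density $|r_3|^{-n/3}$, times at most $|r_3|^{n/3}$ representatives in the relevant box. Combining these two regimes with a single bound $\whC^n + |r_3|^{n/3}$ yields the stated inequality.

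The main obstacle will be the careful bookkeeping of the completion/support argument in the regime $\whC < |r_3|^{1/3}$: one needs to track precisely the sublattice on which the exponential sum is nonvanishing, to ensure the $|r_3|^{n/3}$ factor emerges from the 3-full structure and not a weaker power. The shift by $\v{r} \in K_\infty^n$ in the box $|\v{c} - \v{r}| < \whC$ is harmless because the bound on $|S_{r_3, M, \v{a}}(\v{c})|$ does not depend on this shift; only the number of $\v{c} \in \xcal{O}^n$ in the box matters, and this remains $\leq \whC^n$ by the non-archimedean geometry of $K_\infty$. The divisor losses are absorbed into $|r_3|^\varepsilon$ using Lemma \ref{tau(f)} and Lemma \ref{Lemma: tau_k}, and all constants depend only on $n$ and $\varepsilon$ since each estimate invoked is uniform in $q$.
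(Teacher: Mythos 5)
The paper's own proof of this lemma is a one-line citation of \cite[Lemma 6.4]{BVBlue}, noting that the quantities $H_F$ and $|\Delta_F|$ appearing there equal $1$ when $F$ has $\Fq$-coefficients. Your proposal instead tries to reconstruct the proof, and in doing so it introduces a genuine gap: the claimed pointwise bound $|S_{\varpi^{e}}(\v{c})| \ll |\varpi|^{e(n/2+1+\varepsilon)}$ for prime powers $\varpi^e$ with $e \geq 3$ is \emph{false}. You can check this against the paper's own computation in the proof of Lemma~\ref{Lemma: singular series}: there it is shown that for $e = 3k$ one has $S_{\varpi^e}(\v{0}) \asymp |\varpi|^{e+2nk}$, whereas your bound would give $\ll |\varpi|^{3kn/2 + 3k + 3k\varepsilon}$, and the actual value exceeds this by a factor of roughly $|\varpi|^{kn/2}$. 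The square-root cancellation of \cite[Lemma 5.1]{BVBlue} is a statement about the square-free and square parts $b_1 b_2$ of $r$, not about the $3$-full part $r_3$. Indeed, if the pointwise bound you claim held uniformly, the entire lemma would follow trivially from counting the at most $\whC^n$ lattice points in the box, and the $|r_3|^{n/3}$ term in the statement would be redundant — its presence is precisely a signal that pointwise square-root cancellation fails for $3$-full moduli.

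This misdiagnosis propagates into your third step: once the pointwise bound is weakened, the scheme of ``pointwise bound times lattice-point count'' no longer closes, and the two regimes you describe do not assemble into a coherent argument (in the small-box regime $\whC < |r_3|^{1/3}$ the box contains fewer than $|r_3|^{n/3}$ points in total, so speaking of $|r_3|^{n/3}$ representatives there is vacuous). The actual mechanism behind \cite[Lemma 6.4]{BVBlue}, following Lemma 25 of \cite{Hea83}, is structurally different: one factors $r_3 = c^2 d$ with $d$ squarefree and $d \mid c$, substitutes $\v{z} = \v{z}_1 + cd\v{z}_2$ inside the sum, and exploits the resulting congruence constraint $a\nabla g(\v{z}_1) \equiv \v{c} \pmod{c}$ so that the box sum over $\v{c}$ collapses to a count of $\v{z}_1$'s controlled by a Hessian-rank argument. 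This is essentially the computation carried out in full for the variant Lemma~\ref{ModifiedLemma6.4inBlue} in the paper, and a self-contained proof of the present lemma would need to follow that template rather than invoke a per-prime-power square-root bound that does not hold.
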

\begin{proof}
This follows directly from \cite[Lemma 6.4]{BVBlue} on noting that $H_F=|\Delta_F|=1$ in our situation. 
\end{proof}
Let $F^* \in k[x_1, \ldots, x_n]$ be the dual form of $F$. Its zero locus parametrises the set of hyperplanes whose intersection with the cubic hypersurface $F=0$ produces a singular variety. Moreover, $F^*$ is absolutely irreducible and has degree $3\cdot 2^{n-2}$. 
We shall need the following variation of \cite[Lemma 6.4]{BVBlue} in which the sum is restricted to zeros of $F^*$.
\begin{lemma}\label{ModifiedLemma6.4inBlue}
	Let $C \in \mbN$, $M \in \xcal{O}$ and $\varepsilon >0$. Then there exists a constant $c_{n, \varepsilon} >0$, depending only on $n$ and $\varepsilon$, such that
	$$
	\Sum_{\substack{\v{c} \in \xcal{O}^n \\ |\v{c}|<\whC \\ F^*(\v{c}) = 0}} |S_{r_3, M, \v{a}}(\v{c})| \leq c_{n, \varepsilon} |M|^{\frac{2n-5}{3}  +\varepsilon}   \left |r_3 \right|^{\frac{5n + 7}{6}  +\varepsilon }\left( 1+ \whC \right)^{n-\frac{3}{2}+\varepsilon}.
	$$
\end{lemma}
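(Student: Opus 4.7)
My plan is to mirror the proof of Lemma \ref{Lemma6.4inBlue}, but insert the constraint $F^*(\v{c})=0$ by way of a Cauchy--Schwarz step that trades one dimension of the $\v{c}$-range against an $L^2$-average of the exponential sums. First I would record a Schwartz--Zippel / function-field Lang--Weil bound for the dual hypersurface: since $F^*\in k[c_1,\dots,c_n]$ is absolutely irreducible of degree $3\cdot 2^{n-2}$, the count
\[
\#\bigl\{\v{c}\in\xcal{O}^n : |\v{c}|<\whC,\ F^*(\v{c})=0\bigr\}\ \ll_n\ (1+\whC)^{n-1}
\]
holds. Cauchy--Schwarz then gives
\[
\Sum_{\substack{\v{c}\in\xcal{O}^n \\ |\v{c}|<\whC \\ F^*(\v{c})=0}} |S_{r_3,M,\v{a}}(\v{c})|\ \leq\ (1+\whC)^{(n-1)/2}\Biggl(\,\Sum_{\substack{\v{c}\in\xcal{O}^n \\ |\v{c}|<\whC}} |S_{r_3,M,\v{a}}(\v{c})|^2\Biggr)^{1/2},
\]
reducing matters to an unconditional $L^2$-average of $|S_{r_3,M,\v{a}}(\v{c})|^2$ over the whole box $|\v{c}|<\whC$.

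To treat this $L^2$-average, I would expand $|S_{r_3,M,\v{a}}(\v{c})|^2$ as a quadruple sum over $(a,a',\v{y},\v{y}')$ and swap the $\v{c}$-sum to the inside. Applying Lemma \ref{KubotaLemma7} collapses the outer orthogonality to the condition $|\v{y}-\v{y}'|<|r_M|/\whC$, so writing $\v{y}'=\v{y}+\v{h}$ the object to estimate is
\[
\whC^n\,\sideset{}{^*}\sum_{a,a'\bmod r_3}\;\sum_{\v{y},\v{h}}\psi\!\left(\frac{aF(\v{y})-a'F(\v{y}+\v{h})}{r_3}\right),
\]
with $\v{y}\equiv\v{a}\pmod{M}$, $\v{h}\equiv \v{0}\pmod{M}$, $|\v{y}|<|r_M|$, and $|\v{h}|<|r_M|/\whC$. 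Separating the diagonal $a=a'$ (where the exponential reduces to $\psi(a(F(\v{y})-F(\v{y}+\v{h}))/r_3)$ and is handled by Weyl differencing in $\v{h}$) from the off-diagonal (where the cubic-sum bounds of \cite[Lemmas~6.1--6.3]{BVBlue} apply to the shifted form $F(\v{y}+\v{h})$) should yield a bound of the shape
\[
\Sum_{|\v{c}|<\whC}|S_{r_3,M,\v{a}}(\v{c})|^2\ \ll_\varepsilon\ |M|^{\frac{4n-10}{3}+\varepsilon}|r_3|^{\frac{5n+7}{3}+\varepsilon}(1+\whC)^{n-2+\varepsilon}.
\]
Plugging this into the Cauchy--Schwarz inequality yields exactly the claimed bound, since $\tfrac{n-1}{2}+\tfrac{n-2}{2}=n-\tfrac{3}{2}$.

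The main obstacle is establishing the $L^2$-bound above with the correct exponents: the target exponents $\tfrac{2n-5}{3}$ on $|M|$ and $\tfrac{5n+7}{6}$ on $|r_3|$ lie strictly between the generic and diagonal regimes identified in the proof of \cite[Lemma 6.4]{BVBlue}, so a naive combination of pointwise bounds with that lemma is insufficient. Instead one must decompose $r_3$ prime-by-prime using the multiplicativity recorded in Lemma \ref{Lemma4.5inBlue}, applying sharper bounds at those primes $\varpi\mid r_3$ where $\v{c}$ lies off the mod-$\varpi$ dual variety and weaker bounds at the remaining primes (which, when bundled over all $\varpi$, is exactly the constraint $F^*(\v{c})=0$ captured by the $\whC^{n-3/2}$ saving). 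A secondary technical point is uniformity in $q$: every appearance of a divisor function must be controlled by Lemmas \ref{tau(f)} and \ref{Lemma: tau_k}, so that the implied constant $c_{n,\varepsilon}$ is genuinely $q$-independent.
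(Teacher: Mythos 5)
Your Cauchy--Schwarz framework is internally consistent as arithmetic -- a Schwartz--Zippel count $\ll(1+\whC)^{n-1}$ against an $L^2$-average carrying $(1+\whC)^{n-2}$ does recombine to $(1+\whC)^{n-3/2}$, and halving the $|M|$- and $|r_3|$-exponents lands correctly -- but it contains a genuine gap exactly where you flag one: the $L^2$-estimate
\[
\Sum_{|\v{c}|<\whC}|S_{r_3,M,\v{a}}(\v{c})|^{2}\ \ll_\varepsilon\ |M|^{\frac{4n-10}{3}+\varepsilon}|r_3|^{\frac{5n+7}{3}+\varepsilon}(1+\whC)^{n-2+\varepsilon}
\]
is asserted, not proved. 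The diagonal/off-diagonal sketch does not establish how the $\v{c}$-orthogonality extracts the required $\whC^{2}$-saving over the trivial bound, nor why Weyl differencing on the diagonal combined with the cubic-sum bounds of \cite[Lemmas 6.1--6.3]{BVBlue} off the diagonal would land on precisely those $|M|$- and $|r_3|$-exponents; you acknowledge this yourself as ``the main obstacle.'' What you have is a plan for the lemma, not a proof of it.

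The paper's route is quite different and avoids any $L^2$-average. Following Heath-Brown's treatment of the dual-variety contribution (\S 7 of \emph{Cubic forms in ten variables}), it exploits that $r_3$ is cube-full, writes $l=r_3/M=c^2d$ with $d$ squarefree and $d\mid c$, unfolds $S_{r_3,M,\v{a}}(\v{c})$ through the nested substitutions $\v{y}=\v{a}+M\v{z}$, $\v{z}=\v{z}_1+cd\v{z}_2$, $a=a_1+Mca_2$, $\v{z}_1=\v{h}+c\v{j}$, and applies Cauchy--Schwarz \emph{internally} to the sub-sum over $\v{j}$. This produces a bound for $|S_{r_3,M,\v{a}}(\v{c})|$ that is \emph{independent of $\v{c}$}, controlled by $|c|,|d|,|M|$ and the Heath-Brown quantity $S(c',d')$, for which the multiplicative prime-power analysis gives $S(c,d)\ll A^{\omega(c)}|c|^{n-1}|d|^{1/2}$. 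The constraint $F^*(\v{c})=0$ then enters purely as a point count, and the paper invokes the sharper estimate $\xcal{N}\ll(1+\whC)^{n-3/2+\varepsilon}$ of \cite[Lemma 2.10]{BVBlue}, which rests on $F^*$ being absolutely irreducible of degree $3\cdot 2^{n-2}$, rather than the coarser $(1+\whC)^{n-1}$ you propose. Your plan shifts the extra $\whC^{1/2}$-saving from the point count onto the unproved $L^2$-bound; the paper realises it directly in the count, leaving no $L^2$-estimate to establish.
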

\begin{proof}
This proof uses the same methods as in Section 7 of \cite{Hea83}. By (\ref{def:SrMa}), we have
$$S_{r_3, M, \v{a}}(\v{c})  =  {\mathlarger{\sideset{}{^*}\sum}_{|a|<|r_3|}} \Sum_{\substack{\v{y}\in \xcal{O}^n\\ |\v{y}| < |r_{3}|\\ \v{y} \equiv \v{a} \Mod M}} \psi \left( \frac{aF(\v{y})-\v{c}.\v{y}}{r_3}\right),$$
since $r_{3_{M}} = r_3 M/(r_3,M) = r_3$.
Setting $\v{y} = \v{a} + M \v{z}$, we have 
$$
S_{r_3, M, \v{a}}(\v{c})  =  \psi\left(\frac{-\v{c}.\v{a}}{r_3}\right) {\mathlarger{\sideset{}{^*}\sum}_{|a|<|r_3|}}  S_a(\v{c}),
$$
where 
$$
S_a(\v{c}) = \Sum_{\substack{\v{z}\in \xcal{O}^n\\ |\v{z}| < |s|}} \psi \left( \frac{aM^{-1}F(\v{a}+M\v{z}) - \v{c}.\v{z}}{l}\right)
$$
and $l = r_3/M$.
Denote 
\begin{equation}\label{eq:g(z)}
g(\v{z}) = M^{-1}F(\v{a}+M\v{z}), 
\end{equation}
write $l=c^2d$, where $d$ is square-free, $d \mid c$, and put $\v{z} = \v{z}_1 + cd\v{z}_2$, with $|\v{z}_1| < |cd|$. Then,
\begin{align*}
S_a(\v{c})     & = |c|^n  \Sum_{\substack{\v{z}_1\in \xcal{O}^n\\ |\v{z}_1| < |cd| \\ a \nabla g(\v{z}_1) \equiv \v{c} \Mod{c} }}  \psi \left( \frac{ag(\v{z}_1) - \v{c}.\v{z}_1}{c^2d}\right).
\end{align*} 
Write $a = a_1 + Mca_2$ with $|a_1| < |Mc|$. Then $(a,r_3) = 1$ if and only if $(a_1, Mc) =1$ and thus,
\begin{align*}
{\mathlarger{\sideset{}{^*}\sum}_{|a|<|r_3|}}  S_a(\v{c}) 
	& = |c|^n |cd|  {\mathlarger{\sideset{}{^*}\sum}_{|a_1|<|Mc|}}\Sum_{\substack{\v{z}_1\in \xcal{O}^n\\ |\v{z}_1| < |cd| \\ a_1 \nabla g(\v{z}_1) \equiv \v{c} \Mod{c} \\ g(\v{z}_1) \equiv 0 \Mod{cd} }}  \psi \left( \frac{a_1g(\v{z}_1) - \v{c}.\v{z}_1}{c^2d}\right).
\end{align*}

Writing $\v{z}_1 = \v{h} + c\v{j}$ with $|\v{h}| < |c|$, we have $g(\v{z}_1) \equiv g(\v{h}) + c\v{j} \nabla g(\v{h}) \Mod{cd}$ and $a_1 \nabla g(\v{z}_1) \equiv a_1 \nabla g(\v{h}) \Mod{c}$, since $cd \mid c^2$.
Now, $g(\v{z}_1) \equiv 0 \Mod {cd}$ is equivalent to $g(\v{h}) + c\v{j} \nabla g(\v{h})  \equiv 0 \Mod {cd} $. 
Thus, $g(\v{h})  \equiv 0 \Mod {c} $ and we can write $g(\v{h}) = mc$. Thus, $m + \v{j} \nabla g(\v{h}) \equiv 0 \Mod{d}$. Moreover, if  $a_1 \nabla g(\v{h}) = \v{c} + c\v{k}$, then
$
a_1g(\v{z}_1) - \v{c}.\v{z}_1 \equiv a_1g(\v{h}) - \v{c}.\v{h} + c^2(\v{k}.\v{j} + a_1\v{h}\nabla g(\v{j})) \Mod{c^2d}, 
$
and thus, the sum over $\v{z}_1$ becomes
$$
\Sum_{\substack{\v{h}\in \xcal{O}^n\\ |\v{h}| < |c| \\ a_1 \nabla g(\v{h}) = \v{c} + c\v{k} \\ g(\v{h}) = mc }} \psi \left( \frac{a_1g(\v{h}) - \v{c}.\v{h}}{c^2d}\right)\Sum_{\substack{\v{j}\in \xcal{O}^n\\ |\v{j}| < |d| \\ m + \v{j} \nabla g(\v{h})  \equiv 0 \Mod{d} }} \psi \left( \frac{\v{k}.\v{j} + a_1\v{h}\nabla g(\v{j}) }{d}\right).
$$

Denote the sum over $\v{j}$ by $S_{\v{k}, \v{h}}$ and estimate it by writing
$$
\left | S_{\v{k}, \v{h}} \right|^2 = \Sum_{\substack{\v{j}_1, \v{j}_2 \in \xcal{O}^n\\ |\v{j}_1|, |\v{j}_2| < |d| \\ \v{j}_1 \nabla g(\v{h})  \equiv \v{j}_2 \nabla g(\v{h}) \Mod{d} }} \psi \left( \frac{\v{k}.(\v{j}_1 - \v{j}_2) + a_1\v{h}(\nabla g(\v{j}_1) - \nabla g(\v{j}_2))}{d}\right).
$$
Writing $\v{j}_1 = \v{j}_2 + \v{j}_3$ and recalling (\ref{eq:g(z)}), we note that
$$
\v{h} \left( \nabla g(\v{j}_1) - \nabla g(\v{j}_2) \right) =  \frac{1}{2}\v{j}_3^T\nabla^2 g(\v{h})\v{j}_3 + \v{j}_2^T\nabla^2 g(\v{h})\v{j}_3
$$ 
and therefore,
\begin{align*}
\left | S_{\v{k}, \v{h}} \right|^2 
	& \leq \Sum_{\substack{\v{j}_3\in \xcal{O}^n\\ |\v{j}_3| < |d| \\ \v{j}_3 \nabla g(\v{h})  \equiv 0 \Mod{d} }} \left | \Sum_{\substack{\v{j}_2 \in \xcal{O}^n\\ |\v{j}_2| < |d| }} \psi \left( \frac{ \v{j}_2 .(a_1\nabla^2 g(\v{h})\v{j}_3)}{d}\right) \right| 
	 \leq |d|^n M_d(\v{h}),	
\end{align*}
where
$
M_d(\v{h}) = \# \left\{\v{j}_3\in \xcal{O}^n : |\v{j}_3| < |d|,  \nabla^2 g(\v{h})\v{j}_3 \equiv 0 \Mod{d} \right\}.
$
Thus,
\begin{align*}
\Sum_{\substack{\v{c} \in \xcal{O}^n \\ |\v{c}|<\whC \\ F^*(\v{c}) = 0}} |S_{r_3, M, \v{a}}(\v{c})| 
	& \leq |c|^{n+2} |d|^{n/2+1} |M| \Sum_{\substack{\v{c} \in \xcal{O}^n \\ |\v{c}|<\whC \\ F^*(\v{c}) = 0}}  \quad \Sum_{\substack{\v{h}\in \xcal{O}^n\\ |\v{h}| < |c| \\  g(\v{h}) \equiv 0 \Mod{c} }} M_{d}(\v{x})^{1/2}.
\end{align*}

Now note that there exist elements $c' = cM$ and $d' =\frac{d}{(d,M)}$ with $d'\mid c'$, such that
$$
\Sum_{\substack{\v{h}\in \xcal{O}^n\\ |\v{h}| < |c| \\  g(\v{h}) \equiv 0 \Mod{c} }} M_d(\v{h})^{1/2} \leq |(d, M)|^{n/2} \Sum_{\substack{\v{x}\in \xcal{O}^n\\ |\v{x}| < |c'| \\ F(\v{x}) \equiv 0 \Mod{c'} }} N_{d'}(\v{x})^{1/2},
$$
where $ \v{x} = \v{a}+ M \v{h}$ and $N_{d'}(\v{x}) = \# \left\{\v{y} \in \xcal{O}^n : |\v{y} | < |d'|,  \v{H}(\v{x}) \v{y} \equiv 0 \Mod{d'} \right\}.$ Thus,
\begin{align*}
\Sum_{\substack{\v{c} \in \xcal{O}^n \\ |\v{c}|<\whC \\ F^*(\v{c}) = 0}} |S_{r_3, M, \v{a}}(\v{c})| & \leq  |c|^{n+2} |d|^{n/2+1} |M| |(d, M)|^{n/2}  \xcal{N} \Sum_{\substack{\v{x}\in \xcal{O}^n\\ |\v{x}| < |c'| \\  F(\v{x}) \equiv 0 \Mod{c'} }} N_{d'}(\v{x})^{1/2},
\end{align*}
where
$
\xcal{N} := \# \left\{\v{c} \in \xcal{O}^n : |\v{c}|<\whC, F^*(\v{c}) = 0 \right\} \ll \left( 1+ \whC \right)^{n-3/2+\varepsilon} 
$ 
for any $\varepsilon > 0$, by \cite[Lemma 2.10]{BVBlue}.

It remains to bound the inner sum. As is \cite{BVBlue}, let
$$
S(c,d) = \Sum_{\substack{\v{x}\in \xcal{O}^n\\ |\v{x}| < |c| \\  F(\v{x}) \equiv 0 \Mod{c} }} N_{d}(\v{x})^{1/2},
$$
for given $c, d$ in $\xcal{O}$, where $d \mid c$ and $d$ is square-free. This sum satisfies a multiplicativity property, i.e. for any $c_i, d_i$ in $\xcal{O}$ such that $(c_1d_1, c_2d_2) =1$ and $d_i \mid c_i$ we have $S(c_1c_2, d_1d_2) = S(c_1,d_1)S(c_2,d_2).$ Thus, we only need to look at the cases when $c = \varpi^e$ and $d=1$, and $c = \varpi^e$ and $d= \varpi$, for any $e \in \mbZ_{>0}$ and any prime $\varpi$. Note that $F$ is non-singular modulo any prime $\varpi$. 

The arguments that follow are similar to \cite[p. 244]{Hea83}. Define
\begin{align*}
S_0(\varpi^e) &=  \# \left \{ \v{x} \in \xcal{O}^n: |\v{x}| < |\varpi|^e, F(\v{x}) \equiv 0 \Mod{\varpi^e} \right\},\\
S_1(\varpi^e) &=  \# \left \{ \v{x} \in \xcal{O}^n: |\v{x}| < |\varpi|^e, \varpi \nmid \v{x}, F(\v{x}) \equiv 0 \Mod{\varpi^e} \right\},
\end{align*}
for $e \geq 1$. Then, as in \cite[(7.4), (7.5)]{Hea83}, we have
\begin{align}
S_0(\varpi^e) &= S_1(\varpi^e) + |\varpi|^{2n} S_0(\varpi^{e-3}), \text{ for } e \geq 4,\label{eq:S0e>3}\\
S_0(\varpi^e) &= S_1(\varpi^e)  + |\varpi|^{(e-1)n}, \text{for } 1\leq e \leq 3,\label{eq:S0e<4}\\
S_1(\varpi^{e+1}) &= |\varpi|^{n-1}S_1(\varpi^e), \text{for } e\geq 1. \label{eq: S1}
\end{align}
Since, $N_1(\v{x}) = 1$, we have $S(\varpi^e, 1)  = S_0(\varpi^e).$ Moreover, $S_1(\varpi) \ll |\varpi|^{n-1}$, and thus, 
\begin{equation}
S_1(\varpi^e) \ll |\varpi|^{e(n-1)}, \label{eq: S1 bound}
\end{equation}
for $e \geq 1$.  Thus, for $1 \leq e \leq 3$ and $n \geq 4$, we have $S_0(\varpi^e) \ll |\varpi|^{e(n-1)}.$
Similarly, for $e \geq 4$ and $n \geq 4$, we can use an induction argument to get $S_0(\varpi^e) \ll |\varpi|^{e(n-1)}$. Thus, for $c = \varpi^e$ and $d = 1$, $S(c,d) \leq A_1^{\omega(c)}|c|^{n-1}$. 
Consider now the case when $c = \varpi^ e$ and $d = \varpi$. After a change of variables, $S(\varpi ^e,\varpi)$ is equal to
\begin{align*}
	& \Sum_{\substack{\v{z}\in \xcal{O}^n\\ |\v{z}| < |\varpi | }} N_{\varpi}(\v{z})^{1/2} \# \left \{ \v{x} \in \xcal{O}^n : |\v{x}|< |\varpi| ^e , \v{x} \equiv \v{z} \Mod{\varpi}, F(\v{x}) \equiv 0 \Mod{\varpi ^e} \right \}. 
\end{align*}

First analyse the contribution to $S(\varpi ^e,\varpi)$ coming from $\v{z}$ such that $\varpi \nmid \v{z}$. Then, as in \cite{Hea83}, by Cauchy's inequality, it follows that this contribution is
\begin{align*}
&\ll \varpi^{(e-1)(n-1)}  \Sum_{\substack{\v{z}\in \xcal{O}^n\\ |\v{z}| < |\varpi | \\ \varpi \nmid \v{z} \\ F(\v{z}) \equiv 0 \Mod{\varpi} }} N_{\varpi}(\v{z})^{1/2}
\leq \varpi^{(e-1)(n-1)} S_{N_\varpi}(\v{z})^{1/2} S_0(\varpi)^{1/2},
\end{align*}
where 
$$
S_{N_\varpi}(\v{z}) = \# \left \{ \v{z}, \v{y} \in \xcal{O}^n : |\v{z}| < |\varpi |, |\v{y}| < |\varpi |, \varpi \nmid \v{z}, \v{H}(\v{z}).\v{y}  \equiv \v{0} \Mod {\varpi } \right \}.
$$
Then, by (\ref{eq: S1}), (\ref{eq: S1 bound}) and \cite[Lemma 4]{Hea83}, there exists some constant $A$ such that the contribution to $S(\varpi^e, \varpi)$ coming from $\v{z}$ such that $\varpi \nmid \v{z}$ is
\begin{align*}
& \leq  A^{\omega(\varpi)} \varpi^{(e-1)(n-1)} |\varpi|^{\frac{n}{2}} |\varpi|^{\frac{n-1}{2}} =  A \varpi^{e(n-1) + \frac{1}{2}}.
\end{align*}

The remaining contribution to $S(\varpi ^e,\varpi)$ comes from $\v{z} = \v{0}$. In this case,
$
N_{\varpi}(\v{0}) = |\varpi|^n, 
$
and thus this contribution is 
\begin{align}
& |\varpi |^{\frac{n}{2}}  \# \left \{ \v{y} \in \xcal{O}^n : |\v{y}|< |\varpi| ^{e-1} , \varpi^3F(\v{y}) \equiv 0 \Mod{\varpi ^e} \right \}.\label{eq: contrib from z = 0}
\end{align}
Then, as in \cite{Hea83}, if $1 \leq e \leq 3$, (\ref{eq: contrib from z = 0}) becomes
\begin{equation} \label{eq: e < 3}
|\varpi |^{\frac{n}{2}}  \# \left \{ \v{y} \in \xcal{O}^n : |\v{y}|< |\varpi| ^{e-1} \right \} = |\varpi |^{n(e-1/2)}, 
\end{equation}
and if $4 \leq e$, there exists a constant $A$ such that (\ref{eq: contrib from z = 0}) is equal to
\begin{align}
& |\varpi |^{\frac{5n}{2}} S_0(\varpi^{e-3}) \leq A^{\omega(\varpi)} |\varpi |^{\frac{5n}{2} + (e-3)(n-1)} = A |\varpi |^{e(n-1) - \frac{n}{2} + 3}. \label{eq: e > 3}
\end{align}
Note that if $n \geq 5$, then the contributions in (\ref{eq: e < 3}) and (\ref{eq: e > 3}) are both $\ll |\varpi|^{e(n-1) + \frac{1}{2}}$, and thus  $S(c,d) \ll A^{\omega(c)} |c|^{n-1}|d|^{1/2}$.

Putting everything together, we have 
\begin{align*}
\Sum_{\substack{\v{c} \in \xcal{O}^n \\ |\v{c}|<\whC \\ F^*(\v{c}) = 0}} |S_{r_3, M, \v{a}}(\v{c})| & \leq  |c|^{n+2} |d|^{\frac{n}{2}+1} |M| |(d, M)|^{\frac{n}{2}} A^{\omega(c')} |c'|^{n-1}|d'|^{\frac{1}{2}} \left( 1+ \whC \right)^{n-\frac{3}{2}+\varepsilon}.
\end{align*}
Then, an application of Lemma \ref{Lemma: tau_k} concludes the proof.
\end{proof}
\subsection{Exponential Integral}\label{Integral}
The following result is similar to \cite[Lemma 7.3]{BVBlue}. It gives a good upper bound for $I_{\tilde{r}}\left(\theta; \v{c}\right)$, for $r, \theta, \v{c}$ appearing in the expression for $N(d)$  in Lemma \ref{Lemma4.4inBlue}.
\begin{lemma}\label{Lemma7.3inBlue}
We have
$$
\left |I_{\tilde{r}}\left(\theta; \v{c}\right) \right| \ll \min \left\{q^{-n}, q^n |\theta P^3|^{-n/2} \right\},
$$
where the implicit constant is independent of $q$.
\end{lemma}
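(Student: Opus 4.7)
The plan is to reduce the bound on $I_{\tilde r}(\theta;\v c)$ to a bound on the oscillatory integral $J_G$ via the identity
$$
I_{\tilde r}(\theta;\v c) = \frac{1}{q^n}\,\psi\!\left(\frac{P\v c\cdot\v b}{\tilde r t}\right)\,J_G\!\left(\theta P^3;\,\frac{Pt^{-1}\v c}{\tilde r}\right)
$$
already recorded in the proof of Lemma~\ref{Lemma4.4inBlue}, and then to invoke the analogue of \cite[Lemma 7.3]{BVBlue} applied to $J_G$. The first half of the minimum, $|I_{\tilde r}|\leqslant q^{-n}$, is immediate: the integrand defining $J_G$ has modulus one on $\mbT^n$, which carries Haar measure $1$, so $|J_G|\leqslant 1$, and the exponential prefactor also has modulus one.

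For the non-trivial half, I would apply the non-archimedean stationary phase estimate of \cite[Lemma 7.3]{BVBlue} to $J_G$ with parameters $\lambda=\theta P^3$ and $\v \eta = Pt^{-1}\v c/\tilde r$. The hypotheses of that lemma, namely conditions \cite[(7.1) and (7.3)]{BVBlue} on the weight $\omega$ and the centre $\v x_0 = t^{-1}\v b$, have already been verified in the discussion preceding Lemma~\ref{Lemma4.4inBlue}: we have $F(t^{-1}\v b) = 0$, $|t^{-1}\v b| < 1$, and crucially $|H(\v x)| = |H(t^{-1}\v b)| = q^{-n}$ throughout the support of $\omega$. The non-degeneracy of the Hessian---which is exactly where the hypothesis $H(\v b)\neq 0$ enters, and the reason the lemma excludes the case where both fixed points lie on the Hessian---permits the stationary phase type estimate on $J_G$, which together with the $q^{-n}$ prefactor produces the desired bound $|I_{\tilde r}|\ll q^n\,|\theta P^3|^{-n/2}$.

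The only content beyond a direct citation is confirming that the implied constant is independent of $q$. This is the main (and essentially only) obstacle. It requires tracing through the proof of \cite[Lemma 7.3]{BVBlue} and verifying that every power of $q$ produced by changes of variables, Weyl differencing, or Taylor expansion of $F$ at $\v b$ is already absorbed into the explicit $q^n$ factor, with the residual constant depending only on $n$ and the coefficients of $F$. Because the normalisations $|H(\v x)|=q^{-n}$ and $\mathrm{meas}(\mathrm{supp}\,\omega)=q^{-n}$ are recorded explicitly, this tracking is routine and proceeds in the same spirit as the $q$-uniform verifications carried out in Sections~\ref{Preliminaries} and~\ref{Sum}.
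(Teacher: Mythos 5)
Your overall strategy matches the paper's: reduce to the stationary-phase estimate underlying \cite[Lemma 7.3]{BVBlue}, note the trivial bound $q^{-n}$ from the measure of the support, and appeal to the non-degeneracy of the Hessian at $\v b$ for the other half of the minimum. Your identification of where the hypothesis $H(\v b)\neq 0$ enters is also correct.

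However, you defer the decisive step as ``routine,'' and that step is precisely the content of this lemma. The paper does not simply cite \cite[Lemma 7.3]{BVBlue}; it reproves the measure bound from scratch so that the $q$-dependence is explicit. Concretely, writing $|I_{\tilde r}(\theta;\v c)|\leq \meas(\xcal R)$ with $\xcal R = \{\v x\in\mbT^n : |t\v x-\v b|<1,\ |\theta P^3\nabla F(\v x) + Pt^{-1}\v c/\tilde r|\leq\max\{1,|\theta P^3|^{1/2}\}\}$, the paper expands any $\v x\in\xcal R$ as $\v x = \v b t^{-1}+\v d$ with $|\v d|\leq q^{-2}$, computes $|H(\v x)|=q^{-n}(1+O(q^{-1}))$, observes that the entries of the adjugate of $\v H(\v x)$ have absolute value $q^{-n+1}(1+O(q^{-1}))$, and hence that the entries of $\v H(\v x)^{-1}$ have absolute value $q+O(1)$. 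This last normalisation is what converts the diameter bound $|\v x'|\ll q|\theta P^3|^{-1/2}$ for $\v x,\v x+\v x'\in\xcal R$ into $\meas(\xcal R)\ll q^n|\theta P^3|^{-n/2}$, and it is not something that falls out of \cite[Lemma 7.3]{BVBlue} as stated, since that lemma was proved with $q$ fixed. Your write-up should include this Hessian computation rather than asserting that the $q$-tracking is routine; without it one cannot see why the prefactor is $q^n$ rather than some other power of $q$ depending on $L$ or $\xi$.
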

\begin{proof}
As in \cite[Lemma 7.3]{BVBlue}, we have $\left |I_{\tilde{r}}\left(\theta; \v{c}\right) \right| \leq \meas (\xcal{R})$, where 
$$\xcal{R} = \left\{ \v{x} \in \mbT^n : |t\v{x} - \v{b}| < 1, |\theta P^3 \nabla F(\v{x}) + Pt^{-1}\v{c}/\tilde{r} | \leq \max \left\{1, |\theta P^3|^{1/2} \right \} \right \}.$$ 
If $|\theta P^3| \leq 1$, then we have the trivial bound $\meas ( {\xcal{R}} ) \leq q^{-n}$. Otherwise, given $\v{x} \in \xcal{R}$, we can write it as $\v{x} = \v{b}t^{-1} + \v{d}$, where $\v{d} \in \mbT^n$, $|\v{d}| \leq q^{-2}$. Then
$$
|H(\v{x})| = |t^{-n}H(\v{b}) + \v{d}.\nabla H(\v{b}t^{-1}) + \ldots | = q^{-n}( 1 + O(q^{-1})).
$$
Since the entries in the adjugate of $\v{H}(\v{x})$ have norms equal to $q^{-n+1}(1+O(q^{-1}))$, the inverse of $\v{H}(\v{x})$ has entries with absolute value $q + O(1)$. Thus, if $\v{x}$ and $\v{x} + \v{x}'$ are in $\xcal{R}$, we have $|\v{x'}| \ll q |\theta P^3|^{-1/2}$ and thus, $\meas (\xcal{R}) \ll q^n  |\theta P^3|^{-n/2} $.
\end{proof}
Note that this result uses crucially the condition that one of the two fixed points in Theorem \ref{MainResult} does not lie on the Hessian of $X$.
\section{The main term}\label{Main Term}
In this section we investigate the contribution to $N(d)$ in Lemma \ref{Lemma4.4inBlue} coming from $\v{c}=\v{0}$. Preserving the notation in \cite{BVBlue}, denote this term by $M(d)$. We will always assume $n \geq 10$. 
Thus, 
\begin{align*}
M(d) 	&= |P|^n  \Sum_{\substack{r \in \xcal{O}\\ |r| \leq \whQ  \\ r \text{ monic}}}  |\tilde{r}|^{-n} S_{r, t, \v{a}}\left( \v{0} \right) K_r,
\end{align*}
where
\begin{align*}
K_r &= \int_{|\theta|<\frac{1}{|r|\whQ }}  I_{\tilde{r}}\left(\theta; \v{0}\right) \diff\theta.\end{align*}
Recall that $\nabla F(t^{-1}\v{b}) \neq \v{0}$ and, in particular, $q^{-2} = |\nabla F(t^{-1}\v{b})|.$ This corresponds to taking $\xi = -2$ in \cite[Section 7.3]{BVBlue}. The following result gives a similar bound to that in \cite[Lemma 7.4]{BVBlue}.
\begin{lemma} \label{Lemma7.4inBlue} 
For any $Y  \in \mbN$ and any $\varepsilon >0$ we have
$$
\Sum_{\substack{r \in \xcal{O}\\ |r| = \whY  \\ r \text{ monic}}}  |\tilde{r}|^{-n}  \left |S_{r, t, \v{a}}\left( \v{0} \right) \right | \ll q^{2n} \whY^{-\frac{n}{6}+ \frac{4}{3} + \varepsilon} (q^{-n} + \whY^{\frac{3-n}{3}} ),
$$
where the implicit constant is independent of $q$.
\end{lemma}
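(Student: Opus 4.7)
The plan is to adapt the proof of \cite[Lemma 7.4]{BVBlue}, paying attention to the modulus $M = t$ in $S_{r,t,\v{a}}(\v{0})$ and to the factor $|\tilde{r}|^{-n}$. The first step is to decompose $r = r_3 b_2 b_1$ via the $(j+1)$-full factorisation introduced after Lemma \ref{Lemma4.4inBlue}, with the three factors pairwise coprime. Since $t$ is prime, it divides at most one of $r_3, b_2, b_1$. Iteratively applying Lemma \ref{Lemma4.5inBlue}, I can factor $S_{r,t,\v{a}}(\v{0})$ as a product of three sums of the form $S_{r_3, M_3, \v{a}_3}(\v{0}) \cdot S_{b_2, M_2, \v{a}_2}(\v{0}) \cdot S_{b_1, M_1, \v{a}_1}(\v{0})$ (possibly multiplied by a character of modulus one, when $t \nmid r$), in which the single factor with $t$-divisible modulus inherits $M_i = t$ while the others have $M_i = 1$.

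Next I would bound each factor separately. For the $3$-full part, I apply Lemma \ref{Lemma6.4inBlue} with $\v{r}=\v{0}$ and $\whC$ small enough that only $\v{c}=\v{0}$ contributes, which gives $|S_{r_3, M_3, \v{a}_3}(\v{0})| \ll |M_3|^n |r_3|^{5n/6 + 1 + \varepsilon}$. For the square-free part $b_1$ and the squared-square-free part $b_2 = k_2^2$, I would invoke the Weil-type bounds from \cite[(5.2),(5.3)]{BVBlue} (which are valid uniformly in $q$ under our hypotheses, by the remark opening Section \ref{Sum}). Taking the product of the three estimates produces a pointwise bound on $|S_{r,t,\v{a}}(\v{0})|$ in terms of $|r_3|, |b_2|, |b_1|$.

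Finally I would sum. The weight $|\tilde{r}|^{-n}$ equals $|r|^{-n}$ if $t \mid r$ and $q^{-n}|r|^{-n}$ otherwise, so this weight and the single $|M_i|^n = q^n$ factor coming from Lemma \ref{Lemma6.4inBlue} combine to produce the overall $q$-dependence. Summing over monic $r$ with $|r| = \whY$ and all valid factorisations $\whY = \whY_3 \whY_{b_2} \whY_{b_1}$ uses the standard estimate $\#\{j\text{-full monic } f : |f| = \whX\} \ll \whX^{1/j + \varepsilon}$ together with Lemma \ref{tau(f)} to control the resulting divisor-type sums. The two terms in the stated bound arise from the two extremal regimes: the term $q^n \whY^{-n/6 + 4/3 + \varepsilon}$ (i.e.\ the $q^{2n} \cdot q^{-n}$ piece) arises when $|r_3| \asymp \whY$, where the relatively weak $3$-full bound from Lemma \ref{Lemma6.4inBlue} dominates but the number of such $r_3$ is only $\whY^{1/3+\varepsilon}$; the term $q^{2n}\whY^{-n/2 + 7/3 + \varepsilon}$ arises when $|b_1| \asymp \whY$, where the sharper Weil bound is offset by the abundance of square-free moduli.

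The main obstacle will be the careful bookkeeping of the $q$-factors: one has to track separately the cases $t \mid r$ and $t \nmid r$, and within each case control which of $r_3, b_2, b_1$ absorbs the factor of $t$ and therefore contributes the $|M_i|^n$ from its individual bound. One must also verify that the mixed regimes, where none of $|r_3|, |b_2|, |b_1|$ is close to $\whY$, produce contributions that are dominated by the two extremal terms above; this is a straightforward but tedious exercise in exponent arithmetic of the type familiar from \cite[\S 7]{BVBlue}.
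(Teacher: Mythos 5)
Your proposal is essentially the same decomposition-and-bound argument as the paper, and the structure is sound, but there are two points worth flagging.

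First, your bound $|S_{r_3, M_3, \v{a}_3}(\v{0})| \ll |M_3|^n |r_3|^{5n/6 + 1 + \varepsilon}$ requires applying Lemma~\ref{Lemma6.4inBlue} with $\whC = 1$, i.e.\ $C=0$, so that the sum over $|\v{c}-\v{r}|<\whC$ reduces to the single term $\v{c}=\v{0}$. The paper instead takes $C=1$ (so $\whC = q$, and all $\v{c}\in\Fq^n$ contribute), yielding $|S_{r_3, M_3, \v{a}_3}(\v{0})|\ll |M_3|^n|r_3|^{n/2+1+\varepsilon}(|r_3|^{n/3}+q^n)$ with the extra $+q^n$. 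Whether $C=0$ is admissible depends on the exact hypotheses inherited from \cite[Lemma 6.4]{BVBlue}; the paper's own proof is careful to say ``for any $C>0$'' and then take $C=1$, which suggests you should not assume you can drop the $q^n$ term without checking this. If $C\geq 1$ is required, you fall back to exactly the paper's argument. (Separately, note you bound $b_1$ and $b_2$ individually while the paper bounds $b_1b_2$ together via \cite[Lemma~5.1]{BVBlue}; both routes give $|b_1b_2|^{n/2+1+\varepsilon}$ on $\v{c}=\v{0}$, so this difference is cosmetic.)

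Second, your heuristic for the origin of the two terms is off for the second one. The regime $|b_1|\asymp \whY$, $r_3=b_2=1$, contributes at most $q^n\whY^{-n/2+2+\varepsilon}$ (there are $\ll\whY$ square-free $b_1$, each with $|S_{b_1}(\v{0})|\ll\whY^{n/2+1+\varepsilon}$, against the weight $|\tilde{r}|^{-n}\leq\whY^{-n}$, with at most one extra factor $q^n$ from $|\tilde{r}|^{-n}|M_3|^n$); this is strictly smaller than $q^{2n}\whY^{-n/2+7/3+\varepsilon}$. The stated $q^{2n}\whY^{-n/2+7/3+\varepsilon}$ term is an artifact of the paper's bookkeeping: it comes from the $q^n$ produced by the $C=1$ choice in Lemma~\ref{Lemma6.4inBlue}, combined with the blanket bound $|M_3|^n\leq q^n$ and the crude ``count times max'' estimate $\sum_{r_3}(\ldots)\whY/|r_3| \ll \whY^{1/3}\cdot\whY\cdot(\ldots)$. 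Under your $C=0$ variant, this term does not arise at all; your argument would actually prove a slightly sharper bound, which of course still implies the lemma. So the proof strategy is fine, but the attribution of the second term to $|b_1|\asymp\whY$ is not correct, and if $C=0$ is disallowed you would need to reinstate the $q^n$ and carry it through precisely as the paper does.
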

\begin{proof}
Write $r=b_1b_2r_3$. Then, by the multiplicativity property in Lemma \ref{Lemma4.5inBlue}, we have
$$
\left |S_{r, t, \v{a}}\left( \v{0} \right) \right | = \left |S_{b_1b_2, M, \v{a}'}\left( \v{0} \right) \right | \left |S_{r_3, M_3, \v{a}''}\left( \v{0} \right) \right |,
$$
where $M, M_3 \in \{ 1, t\}$ and $\v{a}', \v{a}'' \in k^n$ depend on $\ord_t(r)$. By \cite[Lemma 5.1]{BVBlue},
$$
\left |S_{b_1b_2, M, \v{a}}\left( \v{0} \right) \right | \ll \left | b_1b_2 \right|^{\frac{n}{2}+1+\varepsilon},
$$
where the implicit constant is independent of $q$. Moreover,
$$
\left |S_{r_3, M_3, \v{a}}\left( \v{0} \right) \right | \leq \Sum_{\substack{\v{c} \in \xcal{O}^n \\ |\v{c}| < \whC }}\left | S_{r_3,M_3,\v{a}}(\v{c}) \right|,
$$
for any $C >0$. Taking $C=1$ and using \cite[Lemma 6.4]{BVBlue}, we get
$$
\left |S_{r_3, M_3, \v{a}}\left( \v{0} \right) \right | \leq \Sum_{\substack{\v{c} \in \xcal{O}^n \\ |\v{c}| < q }}\left | S_{r_3,M_3,\v{a}}(\v{c}) \right| \ll |M_3|^n |r_3|^{n/2 + 1 + \varepsilon} \left( |r_3|^{n/3} + q^n \right), 
$$
where the  the implicit constant depends only on $n$ and $\varepsilon$. On noting that for $|r| = \whY$ we have $|\tilde{r}|^{-n} = q^{-n}\whY^{-n}$ if $t\nmid r$ and $|\tilde{r}|^{-n} = \whY^{-n}$, otherwise, we obtain
\begin{align*}
\Sum_{\substack{r \in \xcal{O}\\ |r| = \whY  \\ r \text{ monic}}}  |\tilde{r}|^{-n}  \left |S_{r, t, \v{a}}\left( \v{0} \right) \right | &\ll |M_3|^n \whY^{-\frac{n}{2}+1+\varepsilon} \Sum_{\substack{r_3 \in \xcal{O}\\ |r_3| \leq  \whY  \\ r_3 \text{ monic} }}  \left( |r_3|^{n/3} + q^n \right) \frac{\whY}{|r_3|}.
\end{align*} 
Then, since $\# \left \{r_3 \in \xcal{O} : |r_3| \leq \whY \right \} = O(\whY^{1/3})$ and $M_3 \in \{1,t\}$, we can bound the above by
\begin{align*}
&\ll q^n \whY^{-\frac{n}{2}+\frac{7}{3}+\varepsilon}\left( \whY^{\frac{n}{3}-1} + q^n \right),
\end{align*}
which concludes the proof.
\end{proof}
Put $C= \widehat{L-\xi} = q^3$. Then, if $C^{-1}\whQ  \leq |r| \leq \whQ $, we have $|\theta| < |r|^{-1} \whQ^{-1} \leq q^3 |P|^{-3}$, and thus, $|\theta P^3| \leq q^2$. Then, by Lemma \ref{Lemma7.3inBlue} we have $K_r = O(q^{3-n}|P|^{-3})$ in this case. On noting that the exponents of $\whY$ in the bound given by Lemma \ref{Lemma7.4inBlue} are negative for $n>8$, we obtain that 
\begin{align*}
	\Sum_{\substack{r \in \xcal{O}\\ q^{-3}\whQ \leq |r| \leq \whQ  \\ r \text{ monic}}}  |P|^n  |\tilde{r}|^{-n} S_{r, t, \v{a}}\left( \v{0} \right) K_r &\ll |P|^{n-3} q^{3+n}q^{(Q-3)(-\frac{n}{6}+ \frac{4}{3} + \varepsilon)} (q^{-n} + q^{(Q-3)\frac{3-n}{3}} ).
\end{align*}

Thus, recalling that $\whQ^2 = |P|^3$, the contribution to $M(d)$ coming from such $r$ is $\ll q^{11n/12+2/3}|P|^{3n/4-1+\varepsilon'}\left(1 + q^{2n-3}|P|^{3/2 -n/2} \right)$.
If $|r| < q^{-3}\whQ$, as in \cite[Section 7.3]{BVBlue}, $K_r$ is independent of $r$. Moreover, we only get a contribution from $|\theta | < q^3 |P|^{-3}$. Thus, for $d \geq 3(n-1)/(n-3)$, we have
\begin{align*}
M(d) 	&= |P|^{n-3} \mathfrak{S}(Q) \mathfrak{J} + O(q^{11n/12+2/3}|P|^{3n/4-1}),
\end{align*}
where
$$
 \mathfrak{S}(Q)  =  \Sum_{\substack{r \in \xcal{O}\\ |r| \leq \whQ  \\ r \text{ monic}}}  |\tilde{r}|^{-n} S_{r, t, \v{a}}\left( \v{0} \right)
$$
and
$$ 
 \mathfrak{J}  =  \int_{|\varphi|<q^3} \int_{K_\infty^n} w \left(t\v{u} - \v{b}\right) \psi \left(\varphi F\left(\v{x}\right)\right) \diff\v{x} \diff\varphi
$$
is the \emph{singular integral}. By taking $\v{x}_0 = t^{-1}\v{b}$ and $L=1$ in \cite[Lemma 7.5]{BVBlue}, it follows that
\begin{equation}\label{Lemma7.5inBlue}
 \mathfrak{J} = \frac{1}{q^{n-3}}. 
\end{equation}

By Lemma \ref{Lemma7.4inBlue} we can extend the summation over $r$ in $\mathfrak{S}(Q)$ to infinity with acceptable error since 
\begin{align*}
 \mathfrak{S} -  \mathfrak{S}(Q) 
 	& \ll q^{\frac{5n}{6}+\frac{4}{3}}|P|^{-\frac{n}{4}+2+\varepsilon}\left(1+q^{\frac{2n+3}{3}}|P|^{2(3-n)}\right).
\end{align*} 
and thus, 
$$
|P|^{n-3}\left(  \mathfrak{S} -  \mathfrak{S}(Q) \right ) \mathfrak{J} = |P|^{\frac{3n}{4}-1} q^{-\frac{n}{6} + \frac{13}{3}} \left(1+q^{\frac{2n+3}{3}}|P|^{2(3-n)} \right). 
$$
Then   
$$
 \mathfrak{S} =  \Sum_{\substack{r \in \xcal{O}\\ r \text{ monic}}}  |\tilde{r}|^{-n} S_{r, t, \v{a}}\left( \v{0} \right)
 $$
 is the absolutely convergent \emph{singular series}.
\begin{lemma}\label{Lemma: singular series}
We have
$$
 \mathfrak{S} = q^{-n+1} + O(q^{-3n/2+3}).
 $$
\end{lemma}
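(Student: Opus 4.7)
The plan is to use the multiplicativity of $S_{r, t, \v{a}}(\v{0})$ from Lemma \ref{Lemma4.5inBlue} to factor $\mathfrak{S}$ as a $t$-adic factor times an Euler product over primes $\varpi \neq t$, then compute the $t$-adic factor exactly and bound the Euler product. Writing each monic $r \in \xcal{O}$ as $r = t^e m$ with $e \geq 0$ and $(m,t) = 1$, Lemma \ref{Lemma4.5inBlue} yields
$$S_{r, t, \v{a}}(\v{0}) = S_{t^e, t, \v{a}'}(\v{0}) \cdot S_{m, 1, \v{0}}(\v{0}),$$
where a CRT calculation identifies $\v{a}' = \mu^{-1} \v{a}$ with $\mu := m(0) \in \Fq^*$. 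Since $F$ is a cubic form with $F(\v{a}) = 0$, we have $F(\v{a}') = \mu^{-3} F(\v{a}) = 0$, and $\v{a}'$ is a smooth point of $X$ (as $\v{a}$ is).

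The main local computation is that $S_{t^e, t, \v{a}'}(\v{0}) = 0$ for every $e \geq 2$. Orthogonality on the sum over primitive residues $a$ modulo $t^e$ yields $S_{t^e, t, \v{a}'}(\v{0}) = q^e N_e - q^{e-1} N_{e-1}$, where
$$N_j = \#\{\v{y} : |\v{y}| < q^e,\ \v{y} \equiv \v{a}' \pmod{t},\ t^j \mid F(\v{y})\} = q^{n(e-j)} \nu_j$$
and $\nu_j$ is the corresponding count mod $t^j$. Smoothness of $\v{a}'$ permits Hensel's lemma to be applied inductively: each solution mod $t^j$ admits exactly $q^{n-1}$ lifts mod $t^{j+1}$, so $\nu_j = q^{(j-1)(n-1)}$ starting from $\nu_1 = 1$, giving $N_j = q^{en - n + 1 - j}$ for $j \geq 1$. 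The exponents then collapse to yield $q^e N_e = q^{en-n+1} = q^{e-1} N_{e-1}$ whenever $e \geq 2$. For $e = 1$ a direct computation gives $S_{t, t, \v{a}'}(\v{0}) = q - 1$, and trivially $S_{1, t, \v{a}'}(\v{0}) = 1$. Since none of these values depend on $\mu$, the $t$-adic factor cleanly factors out:
$$\mathfrak{S} = q^{-n+1} \cdot B, \qquad B := \sum_{\substack{m \in \xcal{O} \text{ monic}\\ (m, t) = 1}} |m|^{-n} S_{m, 1, \v{0}}(\v{0}).$$

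It remains to show $B = 1 + O(q^{(4-n)/2})$, which gives $\mathfrak{S} - q^{-n+1} \ll q^{-n+1+(4-n)/2} = q^{(6-3n)/2} = q^{-3n/2 + 3}$, matching the claim exactly. Multiplicativity writes $B$ as an Euler product $\prod_{\varpi \neq t} \sigma_\varpi$ with $\sigma_\varpi = \sum_{e \geq 0} |\varpi|^{-en} S_{\varpi^e, 1, \v{0}}(\v{0})$. Applying the same orthogonality-plus-Hensel argument to $S_{\varpi^e, 1, \v{0}}(\v{0})$: the sub-sum over $\v{y}$ with $\varpi \nmid \v{y}$ vanishes for $e \geq 2$ (the affine cone over $X$ has its only singularity at the origin, so every $\v{y} \not\equiv \v{0} \pmod{\varpi}$ lying on $F = 0$ is smooth), while substituting $\v{y} = \varpi \v{y}'$ in the complementary sub-sum yields the recursion $S_{\varpi^e, 1, \v{0}}(\v{0}) = |\varpi|^{2n+3} S_{\varpi^{e-3}, 1, \v{0}}(\v{0})$ for $e \geq 4$. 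Combined with the Weil--Deligne bound $|S_{\varpi, 1, \v{0}}(\v{0})| \ll |\varpi|^{n/2+1}$ (from $\#\{\v{y} \bmod \varpi : F(\v{y}) \equiv 0\} = |\varpi|^{n-1} + O(|\varpi|^{n/2})$ for the smooth cubic $F$) and explicit computations for $e = 2, 3$, this yields $\sigma_\varpi = 1 + O(|\varpi|^{-(n-2)/2})$ uniformly in $\varpi$; summing over the $\leq q$ degree-one primes (which dominate) gives the required bound on $B - 1$. The hardest step is the combinatorial collapse $S_{t^e, t, \v{a}'}(\v{0}) = 0$ for $e \geq 2$, which crucially relies on $\v{a}'$ being a smooth point so that Hensel's lemma applies with the precise count of lifts; without this the main term would be drowned in the error.
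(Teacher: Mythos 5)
Your proposal is correct and follows essentially the same strategy as the paper: use the multiplicativity from Lemma \ref{Lemma4.5inBlue} to isolate the $t$-adic factor, compute it exactly as $q^{-n+1}$ via the Hensel/orthogonality recursion, bound the remaining Euler product over $\varpi \neq t$ using the same Weil bound and $|\varpi|^{2n+3}$-step recursion, and conclude via the logarithm. Your unified treatment of the vanishing $S_{t^e,t,\v{a}'}(\v{0})=0$ for all $e\ge 2$ is a slightly cleaner packaging of what the paper does case by case for $e=1,2$ and then $e\ge 3$, and you are more careful than the paper about the CRT-induced replacement $\v{a}\mapsto\v{a}'$ and why it is harmless, but the underlying argument is the same.
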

\begin{proof}
First, recalling the definition of $\tilde{r}$, decompose $\mathfrak{S}$ into
\begin{equation}\label{eq: decomp frakS}
 \mathfrak{S} = q^{-n} \Sum_{\substack{r \in \xcal{O}\\ r \text{ monic} \\ (r,t)=1}}  |r|^{-n} S_{r, t, \v{a}}\left( \v{0} \right) + \Sum_{\substack{r \in \xcal{O}\\ r \text{ monic} \\ t \mid r}}  |r|^{-n} S_{r, t, \v{a}}\left( \v{0} \right).
\end{equation}
Then note that by the multiplicativity property in Lemma \ref{Lemma4.5inBlue}, given $r = t^A \prod_{\varpi \neq t} \varpi^e \in \xcal{O}$, where $A \in \mbZ_{\geq 0}$ and $\varpi$ are primes in $\xcal{O}$, we have
$$
S_{r,t,\v{a}}(\v{0}) = S_{t^A, t, \v{a}}(\v{0}) \prod_{\substack{\varpi \text{ prime} \\ \varpi \neq t\\ \varpi^e \mid \mid r}} S_{\varpi^e}(\v{0}),
$$
where $S_{\varpi^e}(\v{0}) = S_{\varpi^e, 1, \v{0}}(\v{0}) = S_{\varpi^e, 1, \v{a}}(\v{0})$. Thus, 
$$
 \mathfrak{S} = \left(q^{-n} + \sum_{A=1}^\infty q^{-An} S_{t^A,t,\v{a}}(\v{0}) \right) \prod_{\substack{ \varpi \text{ prime} \\ \varpi \neq t}} \sum_{e=0}^\infty |\varpi|^{-en} S_{\varpi^e}(\v{0}).
$$

Now, by (\ref{def:SrMa}), 
$$
S_{t,t,\v{a}}(\v{0}) =  \mathlarger{\sideset{}{^*}\sum}_{|a|<|t|} \Sum_{\substack{\v{y}\in \xcal{O}^n\\ |\v{y}|<|t| \\ \v{y} \equiv \v{a} \Mod{t} }} \psi \left( \frac{aF(\v{y})}{t}\right) = \mathlarger{\sideset{}{^*}\sum}_{|a|<|t|} \psi \left( \frac{aF(\v{a})}{t}\right) = \mathlarger{\sideset{}{^*}\sum}_{|a|<|t|} 1 = q -1,
$$
since $F(\v{a})=0$. Similarly, by (\ref{def:SrMa}), after making the change of variables $\v{y} = \v{a} + t\v{z}$, we have
$$
S_{t^2,t,\v{a}}(\v{0}) 
	=  \mathlarger{\sideset{}{^*}\sum}_{|a|<|t|^2} \Sum_{\substack{\v{z}\in \xcal{O}^n\\ |\v{z}|<|t| }} \psi \left( \frac{a\v{z}.\nabla F(\v{a})}{t}\right)
	= \mathlarger{\sideset{}{^*}\sum}_{\substack{|a|<|t|^2 \\ a\nabla F(\v{a}) \equiv 0 \Mod{t}}} q^n = 0,
$$
Moreover, for $K \geq 3$, we have
\begin{align*}
S_{t^K,t,\v{a}}(\v{0}) 
	&= \Sum_{\substack{\v{y}\in \xcal{O}^n\\ |\v{y}|<|t|^K \\ \v{y} \equiv \v{a} \Mod{t} }} \left( \mathlarger{\sideset{}{}\sum}_{|a_1|<|t|^K} \psi \left( \frac{a_1F(\v{y})}{t^K}\right) -  \mathlarger{\sideset{}{}\sum}_{|a_2|<|t|^{K-1}} \psi \left( \frac{a_2F(\v{y})}{t^{K-1}}\right) \right)\\
	&= q^K S_{\v{a}}(K) - q^{K-1 + n} S_{\v{a}}(K-1),
\end{align*}
where 
$
S_{\v{a}}(K)  = \# \left \{ \v{y} \in \xcal{O}^n : |\v{y}| < |t|^K, \v{y} \equiv \v{a} \Mod{t}, F(\v{y}) \equiv 0 \Mod{t^{K}} \right \}.
$
Similarly to \cite[p. 244]{Hea83}, we have $S_{\v{a}}(K)  = q^{n-1}S_{\v{a}}(K-1)$. Thus, for $K \geq 3$, $S_{t^K,t,\v{a}}(\v{0}) = 0$. 

It remains to analyse $S_{\varpi^e}(\v{0})$. By (\ref{def:SrMa}), we have $S_1(\v{0}) = 1$. Moreover, by \cite[(5.2), (5.3)]{BVBlue}, we have $S_{\varpi}(\v{0}) \ll |\varpi|^{\frac{n}{2}+1}$ and $S_{\varpi^2}(\v{0}) \ll |\varpi|^{n+2}$. Also, \cite[Lemma 5.3]{BVBlue} implies that $S_{\varpi^3}(\v{0}) \ll |\varpi|^{2n+3}$ and $S_{\varpi^4}(\v{0}) \ll |\varpi|^{3n+3}$. By similar arguments as above, for $e \geq 5$ we have
$$
S_{\varpi^e}(\v{0}) = |\varpi|^e S_0(\varpi^e) - |\varpi|^{e-1+n}S_0(\varpi^{e-1}),
$$
where $S_0(\varpi^e) =  \# \left \{ \v{x} \in \xcal{O}^n: |\v{x}| < |\varpi|^e, F(\v{x}) \equiv 0 \Mod{\varpi^e} \right\}$, as in the proof of Lemma \ref{ModifiedLemma6.4inBlue}. Then, by (\ref{eq:S0e>3}) -- (\ref{eq: S1 bound}), it follows that for $e=3k+l \geq 5$, where $l \in \{ 0,1,2\}$, we have
\begin{align*} 
S_{\varpi^e}(\v{0}) =& |\varpi|^{e+2n(k-1)}
				\begin{cases}
					\left(S_0(\varpi^3) - |\varpi|^{n-1}S_0(\varpi^2) \right),& \text{if } l = 0,\\
					\left(S_0(\varpi^4) - |\varpi|^{n-1}S_0(\varpi^3) \right),& \text{if } l = 1,\\
					\left(S_0(\varpi^5) - |\varpi|^{n-1}S_0(\varpi^4) \right),& \text{if } l = 2,
				\end{cases}\\
			=&  |\varpi|^{e+2nk}
				\begin{cases}
					1-|\varpi|^{-1}, &\text{if } e = 3k,\\
					S_1(\varpi) +1 - |\varpi|^{n-1}, &\text{if } e = 3k+1,\\
					|\varpi|^n - |\varpi|^{n-1}, &\text{if } e = 3k+2.
				\end{cases}
\end{align*}
Thus, $|\varpi|^{-en}S_{\varpi^e}(\v{0}) \ll |\varpi|^{-2n+5}$ for $e \geq 5$. Putting everything together, we obtain
\begin{align*}
 \mathfrak{S} 
 &=q^{-n+1} \prod_{\substack{ \varpi \text{ prime} \\ \varpi \neq t}}  \left(1 + O(|\varpi|^{-\frac{n}{2}+1}) \right),
\end{align*}
where the implied constant is independent of $q$.

Then, there exists a constant $c$, that is independent of $q$, such that
$$
\log q^{n-1}  \mathfrak{S}  = \sum_{\substack{ \varpi \text{ prime} \\ \varpi \neq t}}  \log \left(1 + \frac{c}{\varpi^{\frac{n}{2}-1}} \right) = \sum_{d\geq 1} \sum_{\substack{ |\varpi|=q^d \\ \varpi \neq t \\ \varpi \text{ prime}}} \sum_{m \geq 1}\frac{1}{m} \left(\frac{c}{\varpi^{\frac{n}{2}-1}} \right)^m  =  O(q^{2-\frac{n}{2}}),
$$
by the same argument as in (\ref{eq: number of primes}). Since $\exp(z) = 1 + O(|z|)$, we have $q^{n-1}  \mathfrak{S} = 1 + O(q^{2-\frac{n}{2}})$, which concludes the proof.
\end{proof}
Thus for $n \geq 10$, we have 
$$
M(d) = \mathfrak{S}  \mathfrak{J}|P|^{n-3}  + O(q^{11n/12+2/3}|P|^{3n/4-1}),
$$
where $\mathfrak{S}$ and $ \mathfrak{J}$ are given by Lemma \ref{Lemma: singular series} and (\ref{Lemma7.5inBlue}), respectively. Note that the error term is satisfactory for Theorem \ref{MainResult}. 
\section{Error term}
There is a satisfactory contribution to $N(d)$ from $|\theta| < \whQ ^{-5}$, since by (\ref{Eq4.1inBlue}), such terms contribute 
$$
 < \sum_{\substack{ r \text{ monic} \\ |r|\leq \whQ }} \whQ  \cdot \whQ ^{-5}|P|^n < \whQ ^{-3}|P|^n < |P|^{n-9/2} < |P|^{n-3}.
$$

Thus, we focus on the contribution from $|\theta| \geq \whQ ^{-5}$. As in \cite{BVBlue}, let $Y, \Theta \in \mbZ$ be such that
\begin{equation}\label{eq:BoundForYandTheta}
0 \leq Y \leq Q, \quad -5Q \leq \Theta < -(Y+Q).
\end{equation}
We will analyse the contribution to $N(d)$ coming from $\v{c}\neq \v{0}$ and $r, \theta$ such that $|r|=\whY $ and $|\theta|=\whTheta $. Denote this contribution by $E(d)=E(d; Y, \Theta)$. This section is similar to \cite[Section 7.4]{BVBlue} and \cite[Section 8]{BVBlue}, however we need to consider separately the cases when $t \mid r$ and $t\nmid r$. Thus, let
\begin{align*}
	B = 
  		\begin{cases}
			 0, &\text{if } t \mid r,\\
			 1, &\text{if } t \nmid r.
  		\end{cases}
\end{align*}
Moreover, note that Lemma \ref{Lemma4.4inBlue} imposes a constraint on $|\v{c}|$. More precisely,
$$
|\v{c}| \leq \whC =  q |\tilde{r}||P|^{-1}J(\Theta) = q^{B+1} |r||P|^{-1}J(\Theta),
$$
where  $\tilde{r} = rt^B$ and 
\begin{equation}\label{eq:Jtheta}
J(\Theta) = \max \left \{ 1, \whTheta |P|^3\right\}.
\end{equation}

Then $E(d)$ is given by
\begin{align*}
	&  \Sum_{\substack{\v{c} \in \xcal{O}^n \\  0 < |\v{c}| \leq  q^{B+1} \whY |P|^{-1}J(\Theta) }}  |P|^n  \Sum_{\substack{r \in \xcal{O}\\ |r| = \whY  \\ r \text{ monic}}} q^{-Bn}|r|^{-n} \int_{|\theta|=\whTheta } S_{r,t,\v{a}}(\v{c})I_{\tilde{r}}(\theta; \v{c})\diff\theta.
\end{align*}
By Lemma \ref{Lemma7.3inBlue}, $\left | I_{\tilde{r}}(\theta; \v{c}) \right | \ll L(\Theta)$, where 
\begin{equation}\label{eq:Ltheta}
L(\Theta) = \min \left\{q^{-n}, q^n \whTheta^{-\frac{n}{2}} |P|^{-\frac{3n}{2}} \right\}.
\end{equation}
Thus, $E(d)$ is 
\begin{equation}
 \ll |P|^n  \Sum_{\substack{\v{c} \in \xcal{O}^n \\ \v{c} \neq \v{0} \\|\v{c}| \leq  q^{B+1} \whY |P|^{-1}J(\Theta) }}  \quad \Sum_{\substack{r \in \xcal{O}\\ |r| = \whY  \\ r \text{ monic}}} q^{-Bn}|r|^{-n} \int_{|\theta|=\whTheta } \left| S_{r,t,\v{a}}(\v{c}) \right | L(\Theta) \diff\theta.  \label{lastEP}
\end{equation}
Moreover, since we must have $\v{c} \neq \v{0}$ and $\v{c} \in \xcal{O}^n$, we get the following bound 
\begin{equation}\label{LowerBoundOnY}
\whY \geq \frac{|P|}{J(\Theta)q^{B+1}}.
\end{equation}

Let $S$ be a set of finite primes to be decided upon in due course but which contains $t$. Any $r \in \xcal{O}$ can be written as $r=b_1'b_1''r_2$, where $b_1'$ is square free such that $\varpi \mid b_1' \Rightarrow \varpi \in S$ and $b_1''$ is square-free coprime to $S$. According to Lemma \ref{Lemma4.5inBlue} there exist $M_1, M_2 \in \{1,t\}$ such that $M_1 \mid b_1'$ and $M_2 \mid r_2$, together with $\v{b}_1,\v{b}_2 \in (\xcal{O}/t\xcal{O})^n$ such that
\begin{equation}\label{Lemma4.5inBlue}
|S_{r,t,\v{a}}(\v{c})|= |S_{b_1'',1,\v{0}}(\v{c})S_{b_1',M_1,\v{b}_1}(\v{c})S_{r_2,M_2,\v{b}_2}(\v{c})|.
\end{equation}
Clearly, $t\nmid b_1''$ for any $r$ and 
\begin{align*}
  		\begin{cases}
			M_1=t, M_2=1, &\text{if } t \mid \mid r,\\ 
    			M_1=1, M_2=t, &\text{if } t^2 \mid r,\\ 
			M_1=1, M_2=1, &\text{otherwise}. 
  		\end{cases}
\end{align*}
Moreover, by \cite[Lemma 2.2]{BVBlue} we have
$$
\int_{|\theta| = \whTheta} \diff\theta = \widehat{\Theta+1} - \whTheta \leq \widehat{\Theta+1}.
$$
Let $\xcal{O}^\sharp= \left \{b \in \xcal{O}: b \text{ is monic and square-free} \right\}$. There exist $\v{b}_1, \v{b}_2 \in (\xcal{O}/t\xcal{O})^n$ such that the bound for $E(d)$ in (\ref{lastEP}) becomes
\begin{align*}
	&\ll \hspace{-0.5cm} \Sum_{\substack{\v{c} \in \xcal{O}^n \\ \v{c} \neq \v{0} \\|\v{c}| \leq  q^{B+1} \whY |P|^{-1}J(\Theta) }} \hspace{-0.5cm} \frac{|P|^n \widehat{\Theta+1} L(\Theta)}{q^{B n} \whY ^{\frac{n-1}{2}}}  \Sum_{\substack{r_2 \in \xcal{O}\\ |r_2| \leq  \frac{\whY}{|b_1''b_1'|} }}  \hspace{0.2cm} \Sum_{\substack{b_1' \in \xcal{O}^\sharp\\ \varpi \mid b_1' \Rightarrow \varpi \in S}}  \frac{\left|S_{b_1',M_1,\v{b}_1}(\v{c})S_{r_2,M_2,\v{b}_2}(\v{c}) \right|}{|b_1'r_2|^{\frac{n+1}{2}}} \cdot \xcal{S},
 \end{align*}
	where
	$$
	\xcal{S} = \Sum_{\substack{b_1'' \in \xcal{O}^\sharp\\ (b_1'',S)=1 \\ |b_1'b_1''r_2| = \whY   }} \hspace{-0.3cm}  \frac{S_{b_1'',1,\v{0}}(\v{c})}{|b_1''|^{\frac{n+1}{2}}}.
$$

From now put $b=b_1''$, and $d=b_1'r_2$, for simplicity. Also, write $S_{b}(\v{c}) = S_{b,1,\v{0}}(\v{c})$. Moreover, take
$$
S = \begin{cases} 
 \left \{ \varpi : \varpi \mid  t F^*(\v{c}) \right \}, &\text{if } F^*(\v{c})\neq 0,\\
 \left \{ \varpi : \varpi \mid t \right \}, &\text{otherwise}.
\end{cases}
$$

\begin{lemma}\label{lemma: Sb}
We have
$$
 \xcal{S}  \ll
     		\begin{cases}
			\whY ^\varepsilon \frac{\whY }{|d|}, &\text{if }  F^*(\v{c}) \neq 0,\\
    			\whY ^\varepsilon \left(\frac{\whY }{|d|}\right)^{1+1/2}, &\text{otherwise},
  		\end{cases}
$$
where $F^*$ is the dual form of $F$. 
\end{lemma}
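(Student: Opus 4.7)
The plan is to exploit the multiplicativity of $S_b(\v{c})$ over square-free $b$, combined with pointwise bounds on individual factors $S_\varpi(\v{c})$ split according to whether $\varpi$ divides $F^*(\v{c})$. Since $b = b_1''$ is monic, square-free, and coprime to $t$, iterated application of Lemma \ref{Lemma4.5inBlue} (with $M=1$) gives the factorisation
$$
S_b(\v{c}) = \prod_{\varpi \mid b} S_\varpi(\v{c}),
$$
reducing the problem to a local analysis at each prime $\varpi \mid b$.

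Next I would invoke the standard Deligne-type estimates for complete cubic exponential sums modulo a prime, as recorded in \cite[Lemma 5.1 and (5.2)]{BVBlue}: one has square-root cancellation
$$
|S_\varpi(\v{c})| \ll |\varpi|^{(n+1)/2} \qquad \text{when } \varpi \nmid F^*(\v{c}),
$$
while in general (i.e.\ when $\varpi$ may divide $F^*(\v{c})$) one only has
$$
|S_\varpi(\v{c})| \ll |\varpi|^{(n+2)/2},
$$
with implicit constants independent of $q$ and $\varpi$. The choice of the set $S$ is engineered exactly so that when $F^*(\v{c}) \ne 0$, the coprimality $(b, S) = 1$ forbids any $\varpi \mid b$ from dividing $F^*(\v{c})$, and hence the sharper bound applies at every prime of $b$.

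In the case $F^*(\v{c}) \neq 0$, multiplying the sharp local bound gives $|S_b(\v{c})| \ll C^{\omega(b)} |b|^{(n+1)/2}$, so every term of $\xcal{S}$ is bounded by $C^{\omega(b)}$, which by Lemma \ref{Lemma: tau_k} is $O_\varepsilon(|b|^\varepsilon) = O_\varepsilon(\whY^\varepsilon)$. Since the number of monic square-free $b$ with $|b| = \whY/|d|$ is at most $\whY/|d|$, summing produces the claimed bound $\ll \whY^\varepsilon \, \whY/|d|$. In the case $F^*(\v{c}) = 0$ the set $S$ contains only primes dividing $t$, so one has to use the weaker estimate at each $\varpi \mid b$; this yields $|S_b(\v{c})|/|b|^{(n+1)/2} \ll C^{\omega(b)} |b|^{1/2}$, and summing over the same $\ll \whY/|d|$ admissible $b$ gives $\ll \whY^\varepsilon (\whY/|d|)^{3/2}$, again after absorbing $C^{\omega(b)}$ via Lemma \ref{Lemma: tau_k}.

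The only subtle point is justifying the dichotomy $|S_\varpi(\v{c})| \ll |\varpi|^{(n+1)/2}$ vs.\ $|\varpi|^{(n+2)/2}$ uniformly in $q$, together with the geometric fact that the loss of square-root cancellation is governed precisely by $F^*(\v{c})$ — this is the standard dual-variety description of singular hyperplane sections of $\{F=0\}$, and in the present setting it is already built into the statements of \cite[Lemma 5.1 and (5.2)]{BVBlue} quoted in Section 3. Once this input is accepted, the remainder is pure bookkeeping: multiplicativity, Lemma \ref{Lemma: tau_k}, and a count of monic square-free polynomials of a prescribed degree.
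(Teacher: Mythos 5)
Your proof is correct and essentially matches the paper's: both rely on multiplicativity of $S_b(\v{c})$ for square-free $b$, the bound $S_\varpi(\v{c}) \ll |\varpi|^{(n+1)/2}\left|(\varpi, F^*(\v{c}))\right|^{1/2}$ (which you state as a dichotomy according to whether $\varpi \mid F^*(\v{c})$, but the content is identical), the engineered choice of $S$ to force $(b, F^*(\v{c}))=1$ when $F^*(\v{c})\neq 0$, and Lemma \ref{Lemma: tau_k} to absorb $C^{\omega(b)}$ into $\whY^\varepsilon$. The final count of monic square-free $b$ with $|b| = \whY/|d|$ is also the same bookkeeping step the paper performs.
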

\begin{proof}
Recall that in our case $|\Delta_F|=1$. Furthermore, by \cite[Lemma 12]{Hea83} and \cite[Lemma 60]{Hoo94}, there exists a constant $ A(n) > 0$ depending only on $n$ such that for a prime $\varpi$ we have
\begin{equation}\label{Equation5.2inBlue}
 S_{\varpi}(\v{c}) \leq A(n) |\varpi|^{\frac{n+1}{2}}\left | \left (\varpi, F^*(\v{c}) \right) \right |^{1/2}. 
 \end{equation}
By Lemma \ref{Lemma4.5inBlue}, $S_{b}(\v{c})$ is a multiplicative function of $b$. Thus, by (\ref{Equation5.2inBlue}) and Lemma \ref{Lemma: tau_k} we have
$$
 \xcal{S} = \Sum_{\substack{b \in \xcal{O}^\sharp\\ (b,S)=1 \\ |bd| = \whY  }} \frac{|S_{b}(\v{c})|}{|b|^{\frac{n+1}{2}}} \ll \Sum_{\substack{b \in \xcal{O}^\sharp\\ (b,S)=1 \\ |bd| = \whY   }}  \left | \left (b, F^*(\v{c}) \right) \right |^{1/2} |b|^\varepsilon \ll \whY ^\varepsilon \Sum_{\substack{b \in \xcal{O}^\sharp\\ (b,S)=1 \\ |bd| = \whY  }}  \left | \left (b,  F^*(\v{c}) \right) \right |^{1/2}.
$$
The definition of $S$ and the constraint that $(b,S)=1$ imply that
\begin{align*}
\left (b,  F^*(\v{c}) \right) = 
  		\begin{cases}
			1, &\text{if }  F^*(\v{c}) \neq 0,\\
    			b, & \text{otherwise},
  		\end{cases}
\end{align*}
and this concludes the proof.
\end{proof}

Thus, we will consider separately the case when $F^*(\v{c}) \neq 0$ and the case when $F^*(\v{c}) = 0$. Denote the contributions to $E(d)$ coming from $\v{c}$ such that $F^*(\v{c}) \neq 0$, respectively $F^*(\v{c}) = 0$, by $E_1(d)$, respectively $E_2(d)$.

\subsection{Treatment of the generic term}\label{section:E1} 
Suppose $F^*(\v{c}) \neq 0$. Then, by the first part of Lemma \ref{lemma: Sb} we have
$$
E_1(d) \ll  \frac{|P|^n \widehat{\Theta+1} L(\Theta) \whY ^{\frac{3-n}{2} + \varepsilon} }{q^{B n} }  \Sum_{\substack{\v{c} \in \xcal{O}^n \\ \v{c} \neq \v{0} \\|c| \leq  q^{B+1} \whY |P|^{-1}J(\Theta) }} R_{1}(\v{c}),
$$
where
$$
R_{1}(\v{c}) = \Sum_{\substack{r_2 \in \xcal{O}\\ |r_2| \leq  \frac{\whY}{|b_1'|} }} \quad  \Sum_{\substack{b_1' \in \xcal{O}^\sharp\\ \varpi \mid b_1' \Rightarrow \varpi \in S}} \quad \frac{\left|S_{b_1',M_1,\v{b}_1}(\v{c})S_{r_2,M_2,\v{b}_2}(\v{c}) \right|}{|b_1'r_2|^{\frac{n+3}{2}}}.
$$
Then, (\ref{Equation5.2inBlue}) implies that 
\begin{align*}
R_{1}(\v{c})	
	&\ll \Sum_{\substack{b_1' \in \xcal{O}^\sharp\\ \varpi \mid b_1' \Rightarrow \varpi \in S}}  \frac{1}{|b_1'|^{1-\varepsilon}} \Sum_{\substack{r_2 \in \xcal{O}\\ |r_2| \leq  \whY }}  \quad \frac{\left|S_{r_2,M_2,\v{b}_2}(\v{c}) \right|}{|r_2|^{\frac{n+3}{2}}}
	\ll  \Sum_{\substack{r_2 \in \xcal{O}\\ |r_2| \leq  \whY }}  \quad \frac{\left|S_{r_2,M_2,\v{b}_2}(\v{c}) \right|}{|r_2|^{\frac{n+3}{2}}},
 \end{align*}
 since by Lemma \ref{Lemma: tau_k} we have 
 $$
 \Sum_{\substack{b_1' \in \xcal{O}^\sharp\\ \varpi \mid b_1' \Rightarrow \varpi \in S}}  \frac{1}{|b_1'|^{1-\varepsilon}}  = \prod_{\varpi \in S} \left(1 - \frac{1}{|\varpi|^{1-\varepsilon}} \right)^{-1} = \prod_{\varpi \in S} \Sum_{k=0}^{\infty} \frac{1}{|\varpi|^{k(1-\varepsilon)}} \leq \prod_{\varpi \in S} C_\varepsilon \ll  |P|^\varepsilon. 
 $$
 
Decompose $r_2$ as $b_2r_3$. Then, by the multiplicativity property in Lemma \ref{Lemma4.5inBlue}, we have $\left | S_{r_2,M_2,\v{b}_2}(\v{c}) \right| \leq \left |S_{b_2,M_2',\v{b}_2'}(\v{c}) S_{r_3,M_3,\v{b}_3}(\v{c})\right|$, for appropriate $M_2', M_3 \in \{ 1, t \}$ and $\v{b}_2', \v{b}_3 \in \left( \xcal{O}/t\xcal{O}\right)^n$. Thus,
\begin{align*}
 \Sum_{\substack{r_2 \in \xcal{O}\\ |b_1'r_2| \leq  \whY }} \frac{\left|S_{r_2,M_2,\v{b}_2}(\v{c})\right| }{|r_2|^{\frac{n+3}{2}}} & \leq  \Sum_{\substack{b_2r_3 \in \xcal{O}\\ |b_1'b_2r_3| \leq  \whY }} \frac{\left|S_{b_2,M_2',\v{b}_2'}(\v{c})S_{r_3,M_3,\v{b}_3}(\v{c})\right| }{|b_2r_3|^{\frac{n+3}{2}}}.
\end{align*}

Moreover, applying Lemma \ref{Lemma6.4inBlue} with $|M_3| \leq q$, 
\begin{align}\label{Sr3}
  \Sum_{\substack{\v{c} \in \xcal{O}^n \\ \v{c} \neq \v{0} \\|\v{c}| \leq  \whC  }}  \quad  \Sum_{\substack{r_3 \in \xcal{O}\\ |r_3| \leq  \frac{\whY }{| b_2|}  }} \frac{\left | S_{r_3, M_3, \v{b}_3}(\v{c}) \right |}{|r_3|^{\frac{n+3}{2}}} 
  &\ll q^n |P|^\varepsilon \left(\whY ^{n/3-1/6 } + \whC ^n \right).
\end{align}
\begin{lemma}\label{Lemma:Sb2M2'}
For $M_2'$ and $\v{b}_2'$ as above and $Y \in \mbZ$, there exists some $\varepsilon > 0$ such that
$$
\Sum_{\substack{b_2 \in \xcal{O}\\ |b_2| \leq  \whY }} \frac{\left|S_{b_2,M_2',\v{b}_2'}(\v{c})\right| }{|b_2|^{\frac{n+3}{2}}}   \ll q^{-2} |P|^\varepsilon,
$$
for any $\v{c} \in \xcal{O}^n$.
\end{lemma}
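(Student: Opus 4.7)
The plan is to exploit multiplicativity of the exponential sum together with the standard Weil-type estimate for squarefull moduli. By the factorisation conventions recorded at the end of Section \ref{CircleMethod}, the modulus $b_2$ is the square of a monic squarefree polynomial $k_2 \in \xcal{O}$, say $k_2 = \prod_i \varpi_i$. Iterating Lemma \ref{Lemma4.5inBlue} across the coprime prime-power factors of $b_2$ gives a factorisation
$$\bigl|S_{b_2, M_2', \v{b}_2'}(\v{c})\bigr| = \prod_i \bigl|S_{\varpi_i^2, M_{\varpi_i}, \v{a}(\varpi_i)}(\v{c})\bigr|,$$
where $M_{\varpi_i} \in \{1, t\}$ (equal to $t$ precisely when $\varpi_i = t$ occurs in $k_2$) and the residues $\v{a}(\varpi_i)$ are determined by $\v{b}_2'$ together with the reductions modulo $\varpi_i$ of the complementary factors.

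Each local factor is then controlled by the squarefull Weil-type bound of \cite[(5.3)]{BVBlue}, which supplies $\bigl|S_{\varpi^2, M_\varpi, \v{a}(\varpi)}(\v{c})\bigr| \ll |\varpi|^{n+1}$, uniformly in $\v{c}$ and the residue data, and with an implied constant independent of $q$. Multiplying these local bounds across the primes of $k_2$ yields
$$\bigl|S_{b_2, M_2', \v{b}_2'}(\v{c})\bigr| \ll c_n^{\omega(k_2)} |b_2|^{(n+1)/2},$$
so, after substituting and changing variable from $b_2$ to $k_2$ with $|b_2| = |k_2|^2$, the problem reduces to bounding the Dirichlet-type sum
$$\sum_{\substack{k_2 \in \xcal{O} \\ k_2 \text{ monic squarefree} \\ |k_2|^2 \leq \whY}} \frac{c_n^{\omega(k_2)}}{|k_2|^2}.$$
Lemma \ref{Lemma: tau_k} absorbs the $c_n^{\omega(k_2)}$ factor into $|k_2|^\varepsilon$, and the remaining series $\sum_{k_2} |k_2|^{-2+\varepsilon}$ is evaluated by grouping $k_2$ by degree and using $\#\{k_2 : |k_2| = q^j\} \leq q^j$.

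The main obstacle will be the extraction of the precise factor $q^{-2}$: a naive execution of the above estimates only delivers a bound of order $|P|^\varepsilon$, because the $k_2 = 1$ term alone contributes $O(1)$. To upgrade this to $q^{-2}|P|^\varepsilon$, one must sharpen the local bound at primes $\varpi$ that are generic with respect to $\v{c}$ (namely, those with $\varpi \nmid F^*(\v{c})$), replacing the exponent $n+1$ by $n$ via the non-singularity of $F \bmod \varpi$ and square-root cancellation, and factorise the resulting product over such generic primes separately from the exceptional primes already collected in the set $S$. Handling the trivial case $b_2 = 1$ directly via the identity $\bigl|S_{1, M_2', \v{b}_2'}(\v{c})\bigr| = 1$ (absorbed elsewhere in the error analysis), and letting the residual $q^{-2}$ emerge from the exceptional-prime factor of the Euler product, should then yield the claimed bound.
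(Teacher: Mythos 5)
Your outline gets the multiplicative structure right and correctly identifies the central obstruction: a naive application of the Weil-type estimate yields only $O(|P|^\varepsilon)$, because the $b_2 = 1$ term alone contributes $1$. But two things are off. First, \cite[(5.3)]{BVBlue} does \emph{not} supply $|S_{\varpi^2}(\v{c})| \ll |\varpi|^{n+1}$ uniformly in $\v{c}$; the correct statement, reproduced in (\ref{Equation5.3inBlue}), carries the extra factor $\left|\left(\varpi, F^*(\v{c})\right)\right|$. Second, and more seriously, your proposed repair --- sharpening the local factor to $|\varpi|^n$ at primes $\varpi \nmid F^*(\v{c})$ --- cannot touch the $b_2 = 1$ term, so the $q^{-2}$ cannot emerge from the Euler product in the way you sketch; and ``absorbing it elsewhere'' is not a proof, since the lemma must hold uniformly in $\v{c}$ to feed into the sums over $\v{c}$ downstream.

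What you are missing is the case split on $M_2' \in \{1, t\}$. When $M_2' = t$, the prime $t$ necessarily divides $b_2$, so $b_2 = 1$ is excluded; the paper factors $S_{b_2,t,\v{b}_2'} = S_{t^2,t,\v{b}_2'}\cdot S_{(k_2/t)^2,1,\v{0}}$ and evaluates the $t$-part directly: a change of variables $\v{y} = \v{b}_2' + t\v{z}$ followed by Lemma~\ref{KubotaLemma7} gives $|S_{t^2,t,\v{b}_2'}(\v{c})| \leq q^n$, rather than the Weil-type $q^{n+1}$ (or $q^{n+2}$ when $t \mid F^*(\v{c})$). Combining that explicit $q^n$ with (\ref{Equation5.3inBlue}) applied to $(k_2/t)^2$ is precisely what produces the $q^{-2}$. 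When $M_2' = 1$, the paper's bound really is only $\ll |P|^\varepsilon$, and this is reconciled with the stated $q^{-2}|P|^\varepsilon$ by the ``there exists some $\varepsilon>0$'' phrasing: since $d$ is fixed, $|P|^\varepsilon = q^{-2}|P|^{\varepsilon + 2/(d+1)}$. So the genuine ingredient you lack is the explicit computation of the $t^2$-modulus sum in the $M_2' = t$ case, not a refinement of the generic square-root cancellation.
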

\begin{proof}
Suppose $M_2' =1$ so that 
$
S_{b_2,M_2',\v{b}_2'}(\v{c}) = S_{b_2,1,\v{0}}(\v{c}).
$
By \cite[(5.3)]{BVBlue}, together with the fact that in our case $|\Delta_F|=1$, there exists a constant $A(n)>0$ depending only on $n$ such that
\begin{equation}\label{Equation5.3inBlue}
S_{\varpi^2}(\v{c}) \leq A(n) |\varpi|^{n+1}\left| \left( \varpi, F^*(\v{c})\right) \right|.
\end{equation}
Then, by the multiplicativity property in Lemma \ref{Lemma4.5inBlue} and by (\ref{Equation5.3inBlue}), 
\begin{align*}
\Sum_{\substack{b_2 \in \xcal{O}\\ |b_2| \leq  \whY }} \frac{\left|S_{b_2,1,\v{0}}(\v{c})\right| }{|b_2|^{\frac{n+3}{2}}} &=  \Sum_{\substack{k_2 \in \xcal{O}\\ |k_2|^2 \leq  \whY }} \frac{\left|S_{k_2^2,1,\v{0}}(\v{c})\right| }{|k_2|^{n+3}} 
	  \ll \Sum_{\substack{k_2 \in \xcal{O}\\ |k_2| \leq  \whY ^{1/2}}} \frac{ \left|A(n)\right|^{\omega(k_2)} |k_2|^{n+1}\left| \left( k_2, F^*(\v{c})\right) \right|  }{|k_2|^{n+3}}.
	\end{align*}
It follows from Lemmas \ref{Lemma: tau_k} and \ref{TypicalSum1} that this can be bounded by	
	\begin{align*}
	\ll |P|^\varepsilon \Sum_{\substack{k_2 \in \xcal{O}\\ |k_2| \leq  \whY ^{1/2}}} \frac{ \left| \left( k_2, F^*(\v{c})\right) \right|  }{|k_2|^{2}} 
	 \ll |P|^\varepsilon \Sum_{\substack{k_2 \in \xcal{O}\\ |k_2| \leq  \whY ^{1/2}}} \frac{1  }{|k_2|} 
	 \ll  |P|^\varepsilon, 
	\end{align*}
as required. 

We now consider the case $M_2' = t$. First, we need to bound the sum
$$
S_{t^2,t,\v{b}_2'}(\v{c}) = \mathlarger{\sideset{}{^*}\sum}_{|a|<|t|^2} \Sum_{\substack{\v{y}\in \xcal{O}^n\\ |\v{y}| < |t|^2 \\ \v{y} \equiv \v{b}_2' \Mod t}} \psi \left( \frac{aF(\v{y})-\v{c}.\v{y}}{t^2} \right).
$$
Making a change of variables $\v{y}=\v{b}_2' + t\v{z}$, we get
\begin{align*}
S_{t^2,t,\v{b}_2'}(\v{c}) 
	&= \psi \left( \frac{-\v{c}.\v{b}_2'}{t^2} \right) \quad  \mathlarger{\sideset{}{^*}\sum}_{|a|<|t|^2} \psi \left( \frac{aF(\v{b}_2')}{t^2} \right) \Sum_{\substack{\v{z}\in \xcal{O}^n\\ |\v{z}| <|t| }} \psi \left( \frac{a\v{z}. \nabla F(\v{b}) - \v{c}.\v{z}}{t} \right).
\end{align*}
 But Lemma \ref{KubotaLemma7} implies that
 $$
  \Sum_{\substack{\v{z}\in \xcal{O}^n\\ |\v{z}| <|t|}} \psi \left( \frac{a\v{z}. \nabla F(\v{b}) - \v{c}.\v{z}}{t} \right) = 
  		\begin{cases}
			q^{n}, &\text{if } \left| a\nabla F(\v{b}) - \v{c}\right| <1,\\
    			0, &\text{otherwise},
  		\end{cases}
 $$
 and hence
\begin{equation}\label{Lemma:t^K}
|S_{t^2,t,\v{b}_2'}(\v{c})| \leq q^{n} \left| \{a \in \xcal{O} : |a| < |t|^2 :  \left| a\nabla F(\v{b}) - \v{c}\right| <1 \}\right| \leq q^{n}. 
\end{equation}

By the definition of $S_{b_2,M_2',\v{b}_2'}$ and the multiplicativity property in Lemma \ref{Lemma4.5inBlue}, we can write it as 
$ 
 S_{b_2,t,\v{b}_2'}(\v{c})= S_{t^2,t,\v{b}_2'}(\v{c})S_{(k_2/t)^2,1,\v{0}}(\v{c}).
$
The second sum is well understood and can be bounded using (\ref{Equation5.3inBlue}), giving
$$
\left|S_{(k_2/t)^2,1,\v{0}}(\v{c})\right|	
	 \ll |P|^\varepsilon |b_2|^{\frac{n+2}{2}} \frac{1}{q^{n+2}}.  
$$
Then, by (\ref{Lemma:t^K}) we have
$ 
 \left| S_{b_2,t,\v{b}_2'}(\v{c}) \right|  \ll q^{-2} |P|^\varepsilon |b_2|^{\frac{n+2}{2}}.  
 $ Hence,
$$
 \Sum_{\substack{b_2 \in \xcal{O}\\ |b_2| \leq  \whY }} \frac{\left|S_{b_2,t,\v{b}_2'}(\v{c})\right|}{|b_2|^{\frac{n+3}{2}}} \ll q^{-2} |P|^\varepsilon\Sum_{\substack{b_2 \in \xcal{O}\\ |b_2| \leq  \whY }} \frac{1}{|b_2|^{\frac{1}{2}}}
 	 = q^{-2} |P|^\varepsilon \Sum_{\substack{k_2 \in \xcal{O}\\ |k_2| \leq  \whY ^{1/2}}} \frac{1}{|k_2|}
	 \ll q^{-2} |P|^\varepsilon ,
$$
by Lemma \ref{TypicalSum1}.
\end{proof}
Thus, putting everything together,
\begin{align*}
\Sum_{\substack{\v{c} \in \xcal{O}^n \\ \v{c} \neq \v{0} \\|\v{c}| \leq  q^{B+1}  \whY J(\Theta)/|P| }} \hspace{-1cm} R_{1}(\v{c}) 
	& \ll q^{n-2} |P|^\varepsilon \left(\whY ^{n/3-1/6} + \left(q^{B+1} \whY |P|^{-1}J(\Theta)\right)^n \right),
\end{align*}
and hence,
\begin{align*}
E_1(d) 
 & \ll  q^{n-2} |P|^\varepsilon  \widehat{\Theta+1} L(\Theta) \left( \frac{|P|^n }{q^{Bn}\whY^{ \frac{n}{6}-\frac{4}{3}}} + q^n J(\Theta)^{n} \whY ^{\frac{n+3}{2}}\right).
 \end{align*}

Then, by (\ref{eq:Ltheta}) and (\ref{LowerBoundOnY}), the first term is
\begin{align*}
&\ll q^{\frac{7n}{6}-\frac{7}{3} - B(\frac{5n}{6} + \frac{4}{3})} \whTheta |P|^{\frac{5n}{6} + \frac{4}{3} + \varepsilon} J(\Theta)^{\frac{n}{6}-\frac{4}{3}} \min \left\{q^{-n}, q^n \whTheta^{-\frac{n}{2}} |P|^{-\frac{3n}{2}} \right\}.
\end{align*}
Noting that $B \in \{0, 1\}$ and $\min \{X, Z\} \leq X^uZ^v $ for any $u,v\geq 0$ such that $u+v=1$, by (\ref{eq:Jtheta}), we obtain
\begin{align*}
&\ll q^{\frac{7n}{6}-\frac{7}{3}-nu +nv}  \whTheta^{1-\frac{nv }{2}} |P|^{\frac{5n}{6} + \frac{4}{3}-\frac{3nv }{2} + \varepsilon} \max\left\{1, \whTheta |P|^3 \right\}^{\frac{n}{6}-\frac{4}{3}} .
\end{align*}
If $\whTheta |P|^3 \leq 1$, take $u = 1-\frac{2}{n}$ and $v = \frac{2}{n}$. Then, we obtain $\ll q^{\frac{n}{6}+\frac{5}{3}}  |P|^{\frac{5n}{6} - \frac{5}{3} + \varepsilon }$. Otherwise, if $\whTheta |P|^3 > 1$, take $u = \frac{2}{3}+\frac{2}{3n}$ and $v = \frac{1}{3}-\frac{2}{3n}$. Then, we get $\ll q^{\frac{5n}{6}-\frac{11}{3}}  |P|^{\frac{5n}{6} - \frac{5}{3} + \varepsilon}$.

Similarly, by (\ref{eq:Jtheta}) and (\ref{eq:Ltheta}), the second term is
\begin{align*}
&\ll q^{2n-1-nu+nv}  \whTheta^{1-\frac{nv}{2}} \max\left\{1, \whTheta |P|^3 \right\}^{n} |P|^{-\frac{3nv}{2}+\varepsilon}\whY^{\frac{n+3}{2}},
\end{align*}
for any $u,v\geq 0$ such that $u+v=1$. If $\whTheta |P|^3 \leq 1$, then by (\ref{eq:BoundForYandTheta}), we have
\begin{align*}
&\ll q^{2n-1-nu+nv}  \whTheta^{1-\frac{nv}{2}} |P|^{\frac{3n}{4}+\frac{9}{4}-\frac{3nv}{2}+\varepsilon}.
\end{align*}
Taking $u = 1 - \frac{2}{n}$ and $v = \frac{2}{n}$, we get $\ll q^{n+3} |P|^{\frac{3n}{4}-\frac{3}{4}+\varepsilon}$. Otherwise, if $\whTheta |P|^3 > 1$, we have
\begin{align*}
&\ll q^{2n-1-nu+nv}  \whTheta^{1+n-\frac{nv}{2}} |P|^{3n-\frac{3nv}{2}+\varepsilon}\whY^{\frac{n+3}{2}}.
\end{align*}
On noting that the exponent of $\whTheta$ is strictly positive for any $v \leq 1$, by (\ref{eq:BoundForYandTheta}), we have
\begin{align*}
&\ll q^{2n-1-nu+nv}|P|^{\frac{3n}{2}-\frac{3nv}{4}-\frac{3}{2}+\varepsilon}\whY^{-\frac{n}{2}+\frac{1}{2}+\frac{nv}{2}}.
\end{align*}
Taking $u = \frac{1}{n}$ and $v = 1- \frac{1}{n}$, we get $\ll q^{3n-3}|P|^{\frac{3n}{4}-\frac{3}{4}+\varepsilon}$.

Thus, for $n \geq 10$ we have $E_1(d) \ll q^{\frac{5n}{6}-\frac{11}{3}}  |P|^{\frac{5n}{6} - \frac{5}{3} + \varepsilon} + q^{3n-3}|P|^{\frac{3n}{4}-\frac{3}{4}+\varepsilon}$. Hence,
$$
E_1(d) \ll |P|^{\varepsilon} \left( q^{\frac{5(d+2)n}{6}-\frac{5d+16}{3}} + q^{\frac{3(d+5)n}{4}-\frac{3(d+5)}{4}} \right),
$$
which is satisfactory for Theorem \ref{MainResult}.

\subsection{Treatment of $E_2(d)$} Suppose $F^*(\v{c}) = 0$. Denote the contribution to $E(d)$ coming from $\v{c}$ such that $F^*(\v{c}) = 0$ by $E_2(d)$. Then, by the second part of Lemma \ref{lemma: Sb} we have
\begin{align*}
E_2(d) &\ll \frac{|P|^n \widehat{\Theta+1}  \whY ^{2 - \frac{n}{2} + \varepsilon}L(\Theta)}{q^{B n}} \hspace{-0.5cm}  \Sum_{\substack{\v{c} \in \xcal{O}^n \\ \v{c} \neq \v{0} \\|\v{c}| \ll  q^{B+1} \whY |P|^{-1}J(\Theta) }} \hspace{0.1cm} R_{2}(\v{c}),
 \end{align*}
 where
 $$
 R_{2}(\v{c}) =   \Sum_{\substack{r_2 \in \xcal{O}\\ |r_2| \leq  \frac{\whY}{|b_1'|} }}  \Sum_{\substack{b_1' \in \xcal{O}^\sharp\\ \varpi \mid b_1' \Rightarrow \varpi \in S}} \quad \frac{\left|S_{b_1',M_1,\v{b}_1}(\v{c})S_{r_2,M_2,\v{b}_2}(\v{c}) \right|}{|b_1'r_2|^{\frac{n}{2}+2}}.
 $$

As in Section \ref{section:E1}, decompose $r_2$ as $b_2r_3$.  Then, using Lemma \ref{ModifiedLemma6.4inBlue} and $|M_3|\leq q$, we get
  $$
  \Sum_{\substack{\v{c} \in \xcal{O}^n \\ \v{c} \neq \v{0} \\|\v{c}| \leq  \whC \\ F^*(\v{c}) = 0 }}  \quad  \Sum_{\substack{r_3 \in \xcal{O}\\ |r_3| =  \whR }} \frac{\left | S_{r_3, M_3, \v{b}_3}(\v{c}) \right |}{|r_3|^{\frac{n}{2}+2}} \ll q^{\frac{2n-5}{3}  +\varepsilon}   \whR^{\frac{n}{3} -\frac{1}{2} +\varepsilon } \left( 1+ \whC \right)^{n-\frac{3}{2}+\varepsilon}.
  $$
 Then, 
\begin{align}
  \Sum_{\substack{\v{c} \in \xcal{O}^n \\ \v{c} \neq \v{0} \\|\v{c}| \leq  \whC    \\ F^*(\v{c}) = 0 }}  \quad  \Sum_{\substack{r_3 \in \xcal{O}\\ |r_3| \leq  \frac{\whY }{|b_1' b_2|}  }} \frac{\left | S_{r_3, M_3, \v{b}_3}(\v{c}) \right |}{|r_3|^{\frac{n}{2}+2}} 
  &\ll q^{\frac{2n-5}{3} +\varepsilon}  \whY^{\frac{n}{3} -\frac{1}{2}  +\varepsilon } \left( 1+ \whC^{n-\frac{3}{2}+\varepsilon} \right)\label{BSr3}
\end{align}
Moreover, Lemma \ref{Lemma:Sb2M2'} implies that 
\begin{equation}\label{BSb21}
\Sum_{\substack{b_2 \in \xcal{O}\\ |b_2| \leq  \whY }} \frac{\left|S_{b_2,M_2',\v{b}_2'}(\v{c})\right| }{|b_2|^{\frac{n}{2}+2}} \ll q^{-2}|P|^\varepsilon.
\end{equation}
It follows from (\ref{Equation5.2inBlue}) that
\begin{align*}
R_2(\v{c}) 
	& \ll q^{-2} \Sum_{\substack{b_1' \in \xcal{O}^\sharp\\ \varpi \mid b_1' \Rightarrow \varpi \in S}} |b_1'|^{\varepsilon - 1} \Sum_{\substack{r_2 \in \xcal{O}\\ |b_1'r_2| \leq  \whY }} \frac{\left|S_{r_2,M_2,\v{b}_2}(\v{c})\right| }{|r_2|^{\frac{n}{2}+2}}.
\end{align*}
Then, by (\ref{BSr3}) and (\ref{BSb21}), we have
$$
\Sum_{\substack{\v{c} \in \xcal{O}^n \\ \v{c} \neq \v{0} \\|\v{c}| \leq  q^{B+1} \whY |P|^{-1}J(\Theta) }} \hspace{-1cm} R_2(\v{c})  \ll q^{\frac{2n}{3}-\frac{11}{3} }  \whY^{\frac{n}{3}-  \frac{1}{2}  + \varepsilon } \left( 1+ \left(q^{B+1} \whY |P|^{-1}J(\Theta)\right)^{n-\frac{3}{2}} \right).
$$

Thus, we can bound $E_2(d)$ by
\begin{align*}
	q^{\frac{2n}{3}-\frac{8}{3} } |P|^\varepsilon \whTheta \left( \frac{|P|^n L(\Theta) }{q^{Bn}\whY ^{\frac{n}{6} -\frac{3}{2}} } + q ^{n-\frac{3(B+1)}{2}}|P|^{\frac{3}{2}}J(\Theta)^{n-\frac{3}{2}}L(\Theta) \whY ^{\frac{n+1}{2}}\right).
\end{align*}

Then, by (\ref{eq:Ltheta}) and (\ref{LowerBoundOnY}), the first term is
\begin{align*}
&\ll q^{\frac{5n}{6}-\frac{7}{6} -B(\frac{5n}{6} -\frac{3}{2})} \whTheta  \min \left\{q^{-n}, q^n \whTheta^{-\frac{n}{2}} |P|^{-\frac{3n}{2}} \right\}  J(\Theta)^{ \frac{n}{6} -\frac{3}{2} }  |P|^{ \frac{5n}{6} +\frac{3}{2} +\varepsilon}.
\end{align*}
Since $B \in \{0, 1\}$ and $\min \{X, Z\} \leq X^uZ^v $ for any $u,v\geq 0$ such that $u+v=1$, by (\ref{eq:Jtheta}), we obtain
\begin{align*}
&\ll q^{\frac{5n}{6}-\frac{7}{6} -nu+nv} \whTheta^{1-\frac{nv}{2}} \max\left\{1, \whTheta |P|^3 \right\}^{ \frac{n}{6} -\frac{3}{2} }  |P|^{ \frac{5n}{6} +\frac{3}{2}-\frac{3nv}{2} +\varepsilon}.
\end{align*}
If $\whTheta |P|^3 \leq 1$, take $u = 1-\frac{2}{n}$ and $v = \frac{2}{n}$. Then, we obtain $\ll q^{-\frac{n}{6}-\frac{17}{6} } |P|^{ \frac{5n}{6} -\frac{3}{2} +\varepsilon}$. Otherwise, if $\whTheta |P|^3 > 1$, take $u=\frac{2}{3}+\frac{1}{n}$ and $v=\frac{1}{3}-\frac{1}{n}$. Then, we get $\ll q^{\frac{n}{2}-\frac{19}{6} } |P|^{ \frac{5n}{6} -\frac{3}{2} +\varepsilon}$.

Similarly, by (\ref{eq:Jtheta}), (\ref{eq:Ltheta}) and (\ref{LowerBoundOnY}), the second term is
\begin{align*}
&\ll q^{\frac{5n}{3}-\frac{25}{6} -nu +nv} \whTheta^{1-\frac{nv}{2}} |P|^{\frac{3}{2}-\frac{3nv}{2}+\varepsilon} \max\left\{1, \whTheta |P|^3 \right\}^{n-\frac{3}{2}} \whY ^{\frac{n+1}{2}},
\end{align*}
for any $u,v\geq 0$ such that $u+v=1$. If $\whTheta |P|^3 \leq 1$, then by (\ref{eq:BoundForYandTheta}), we have
\begin{align*}
&\ll q^{\frac{5n}{3}-\frac{25}{6} -nu +nv} \whTheta^{1-\frac{nv}{2}} |P|^{\frac{3n}{4}+\frac{9}{4}-\frac{3nv}{2}+\varepsilon}.
\end{align*}
Taking $u = 1 - \frac{2}{n}$ and $v = \frac{2}{n}$, we get $\ll q^{\frac{2n}{3}-\frac{1}{6} } |P|^{\frac{3n}{4}-\frac{3}{4}+\varepsilon}$. Otherwise, if $\whTheta |P|^3 > 1$, we have
\begin{align*}
\ll q^{\frac{5n}{3}-\frac{25}{6} -nu +nv} \whTheta^{n-\frac{1}{2}-\frac{nv}{2}} |P|^{3n - 3-\frac{3nv}{2}+\varepsilon} \whY ^{\frac{n+1}{2}}.
\end{align*}
Since the exponent of $\whTheta$ is strictly positive for any $v \leq 1$, by (\ref{eq:BoundForYandTheta}), we have
\begin{align*}
\ll q^{\frac{5n}{3}-\frac{25}{6} -nu +nv} \whY^{-\frac{n}{2}+1+\frac{nv}{2}} |P|^{\frac{3n}{2}-\frac{9}{4}-\frac{3nv}{4}+\varepsilon}.
\end{align*}
Taking $u = \frac{2}{n}$ and $v = 1- \frac{2}{n}$, we get $\ll q^{\frac{8n}{3}-\frac{49}{6}} |P|^{\frac{3n}{4}-\frac{3}{4}+\varepsilon}$.

Thus, for $n \geq 10$ we have $E_2(d) \ll q^{\frac{n}{2}-\frac{19}{6} } |P|^{ \frac{5n}{6} -\frac{3}{2} +\varepsilon} +q^{\frac{8n}{3}-\frac{49}{6}} |P|^{\frac{3n}{4}-\frac{3}{4}+\varepsilon}$. Hence,
$$
E_2(d) \ll |P|^{\varepsilon}\left(q^{\frac{(5d+8)n}{6}-\frac{3d}{2}-\frac{14}{3} } +q^{\frac{(3d+11)n}{4}-\frac{3d}{4}-\frac{107}{12}} \right),
$$
which is satisfactory for Theorem \ref{MainResult}.
\end{document}